\setlist[itemize]{leftmargin=*}
\setlist[enumerate]{leftmargin=*}
\newtheorem{theorem}{Theorem}
\newtheorem{lemma}[equation]{Lemma}
\theoremstyle{definition}
\numberwithin{theorem}{section}
\numberwithin{equation}{section}
\renewcommand{\phi}{\varphi}
\newcommand{\E}{\mathcal{E}}
\renewcommand{\pmod}[1]{\,(\operatorname{mod} #1)}
\newcommand{\lcm}{\operatorname{lcm}}
\renewcommand{\geq}{\geqslant}
\renewcommand{\leq}{\leqslant}
\let\oldenumerate=\enumerate
	\def\enumerate{
	\oldenumerate
	\setlength{\itemsep}{5pt}
	}
\let\olditemize=\itemize
	\def\itemize{
	\olditemize
	\setlength{\itemsep}{5pt}
	}
\begin{document}

\title[the Euler totient function near prime arguments]{On the difference in values of the Euler totient function near prime arguments}

\author[S.R.~Garcia]{Stephan Ramon Garcia}
\address{Department of Mathematics\\Pomona College\\610 N. College Ave., Claremont, CA 91711} 
\email{stephan.garcia@pomona.edu}
\urladdr{http://pages.pomona.edu/~sg064747}
\thanks{SRG supported by NSF grant DMS-1265973,
a David L. Hirsch III and Susan H. Hirsch Research Initiation Grant, 
and the Budapest Semesters in Mathematics (BSM)
Director's Mathematician in Residence (DMiR) program.}

\author[F.~Luca]{Florian Luca}
\address{School of Mathematics\\University of the Witwatersrand\\Private Bag 3, Wits 2050, Johannesburg, South Africa\\
Max Planck Institute for Mathematics, Vivatsgasse 7, 53111 Bonn, Germany\\
Department of Mathematics, Faculty of Sciences, University of Ostrava, 30 dubna 22, 701 03
Ostrava 1, Czech Republic}
\email{Florian.Luca@wits.ac.za}
\thanks{F. L. was supported in part by grants CPRR160325161141 and an A-rated researcher award both from the NRF of South Africa and by grant no. 17-02804S of the Czech Granting Agency. }

\begin{abstract}
We prove unconditionally that for each $\ell \geq 1$, the difference $\phi(p-\ell) - \phi(p+\ell)$ is positive for $50\%$ of odd primes $p$ and negative for $50\%$.
\end{abstract}

\subjclass[2010]{11A41, 11A07, 11N36, 11N37}

\keywords{Euler totient, prime, twin prime, Bateman--Horn conjecture, Twin Prime Conjecture, Brun Sieve}

\maketitle

\section{Introduction}

In what follows, $p$ always denotes an odd prime number. 
The inequality $\phi(p-1) \geq \phi(p+1)$ appears to hold
for an overwhelming majority of twin primes $p,p+2$, 
and to be reversed for small, but positive, proportion of the twin primes \cite{GKL}. 
To be more specific,
if the Bateman--Horn conjecture is true, then
the inequality above holds for at least 65.13\% of twin prime pairs and is reversed for at least $0.47\%$ of pairs.
Numerical evidence suggests, in fact, that the ratio is something like 98\% to 2\%.  
In other words, for an overwhelming majority of twin prime pairs $p,p+2$, it appears that the first prime
has more primitive roots than does the second.

Based upon numerical evidence, it was conjectured in \cite{GKL} that this bias disappears
if only $p$ is assumed to be prime.  That is,
$\phi(p-1) > \phi(p+1)$ for $50\%$ of primes and the inequality is reversed for $50\%$ of primes.
We prove this unconditionally and, moreover, we are able to handle wider spacings as well.
If all primality assumptions are dropped, then it is known that $\phi(n-1) > \phi(n+1)$ asymptotically
$50\%$ of the time.  This follows from work of Shapiro, who considered the distribution
function of $\phi(n) / \phi(n-1)$ \cite{Shapiro}. 

Let $\pi(x)$ denote the number of primes at most $x$ and let $\sim$ denote asymptotic equivalence.
The Prime Number Theorem ensures that $\pi(x) \sim x/\log x$.
Our main theorem is the following.

\begin{theorem}\label{Theorem:Main}
Let $\ell$ be a positive integer.  As $x \to \infty$ we have the following:
\begin{enumerate}\addtolength{\itemsep}{5pt}
\item $\# \{ p \leq x \,:\, \phi(p-\ell)>\phi(p+\ell) \} \,\sim\, \frac{1}{2}\pi(x)$.
\item $\# \{ p \leq x \,:\, \phi(p-\ell)<\phi(p+\ell) \} \,\sim\, \frac{1}{2}\pi(x)$.
\item $\# \{ p \leq x \,:\, \phi(p-\ell)= \phi(p+\ell) \} \,=\, o(\pi(x))$.
\end{enumerate}
\end{theorem}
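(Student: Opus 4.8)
The plan is to compare $\phi(p-\ell)$ and $\phi(p+\ell)$ through the multiplicative identity $\phi(n) = n\prod_{q \mid n}(1 - 1/q)$. Writing $n_\pm = p \pm \ell$, we have $\phi(n_-)/\phi(n_+) = (n_-/n_+)\cdot R$, where $R$ is the ratio of the two Euler products over primes dividing $n_-$ and $n_+$ respectively. The factor $n_-/n_+ = (p-\ell)/(p+\ell) = 1 - 2\ell/(p+\ell)$ is always slightly less than $1$, contributing a tiny, controlled bias of size $O(\ell/p)$ toward $\phi(n_-) < \phi(n_+)$. Everything therefore hinges on the product $R$, which is governed by the small prime factors of $n_-$ and $n_+$; the intuition is that for a positive-density set of primes $p$, $n_-$ has a small prime factor not dividing $n_+$ (pushing $R$ below $1$) and symmetrically, and that these two events occur with equal frequency.

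The key steps, in order, are as follows. First, I would fix a large parameter $y$ and factor out the contribution of primes $q \leq y$: define the ``$y$-smooth part'' contributions to $\phi(n_-)$ and $\phi(n_+)$ and show, via a standard sieve/Brun-type argument (the paper advertises the Brun sieve in its keywords), that for all but $o(\pi(x))$ primes $p \leq x$ the large prime factors ($q > y$) of $n_- n_+$ contribute a ratio to $R$ that is $1 + O_\ell(1/\sqrt{y})$ or smaller, hence negligible compared to the effect of a single small prime. Second, for the small primes $q \leq y$, I would study the joint distribution of the residues $p \bmod q$ for $q \leq y$: by the Chinese Remainder Theorem and the prime number theorem in arithmetic progressions (or just Dirichlet, since $y$ is fixed relative to $x \to \infty$), the vector $(p \bmod q)_{q \leq y}$ is equidistributed over the allowable residue classes as $p$ ranges over primes up to $x$. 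Third — and this is the combinatorial heart — I would define, for each admissible residue pattern, the sign of $\log R$ contributed by the small primes, and show that the pattern ``$q \mid n_-$ for some small $q \nmid n_+$ dominating'' and its mirror image ``$q \mid n_+$ dominating'' partition a full-density subset of patterns into two pieces of exactly equal measure. The symmetry $p \mapsto$ (a class making $-\ell$ play the role of $+\ell$) is what forces the two densities to coincide; concretely, for each prime $q$, the events $q \mid p - \ell$ and $q \mid p + \ell$ are disjoint (for $q \nmid 2\ell$) and equiprobable among residues mod $q$, and one leverages independence across distinct $q$ to build an involution on residue patterns swapping the two biases. Finally, one checks that the $O(\ell/p)$ drift from $n_-/n_+$ and the $o(\pi(x))$ exceptional set cannot disturb the leading-order $\tfrac12$–$\tfrac12$ split, and that the boundary case $\phi(n_-) = \phi(n_+)$ forces an exact multiplicative coincidence that a sieve bound shows happens only $o(\pi(x))$ times, giving part (c) and hence (a), (b).

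The main obstacle I expect is controlling the tail contribution of the large prime factors uniformly enough — that is, showing that with the exception of $o(\pi(x))$ primes $p$, the product $\prod_{q \mid n_\pm,\, q > y}(1 - 1/q)^{\pm 1}$ is so close to $1$ that it cannot flip the sign determined by the small primes. This requires ruling out, for almost all $p$, that $n_-$ or $n_+$ has an unusually large ``deficiency'' $\sum_{q \mid n, q > y} 1/q$; the expected value of this sum over $n \leq x$ is $O(1/\log y)$, but one needs a second-moment or Brun-sieve estimate to get an almost-all statement with a quantitative enough error term, and one must handle the shifted arguments $p \pm \ell$ (rather than a generic integer) by sieving the primes $p$ directly. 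A secondary subtlety is making the ``equal measure'' symmetry argument fully rigorous when $q \mid 2\ell$: for those finitely many primes the events $q\mid p-\ell$ and $q \mid p+\ell$ are not disjoint and may even coincide, so they must be excised from the symmetric part of the argument and their (bounded, hence $O(1)$-biased but density-negligible after normalization) effect absorbed into the error term, or shown to cancel by a more careful case analysis.
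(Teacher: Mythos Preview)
Your overall architecture matches the paper's almost exactly: reduce to the sign of $S(p)=\phi(p-\ell)/(p-\ell)-\phi(p+\ell)/(p+\ell)$, truncate at a small-prime threshold $y$, show the $q>y$ tail is negligible for all but $o(\pi(x))$ primes, and then exploit the involution $(m_1,m_2)\mapsto(m_2,m_1)$ on the $y$-smooth parts of $p\pm\ell$ to get the $50/50$ split. So the strategy is sound and is the paper's strategy.

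There is, however, one genuine gap in your plan, and it lies precisely where you locate the ``main obstacle.'' You frame the difficulty as getting the tail $\prod_{q\mid n_\pm,\,q>y}(1-1/q)^{\pm1}$ close enough to $1$ for almost all $p$. But no tail bound of the form $1+O(\epsilon(y))$, however strong, finishes the argument by itself: for any $y$ there is a positive-proportion set of smooth patterns $(m_1,m_2)$ with $|\phi(m_1)/m_1-\phi(m_2)/m_2|$ smaller than your tail error, and on those patterns the large primes \emph{can} flip the sign. The paper calls these ``weird'' primes and handles them by a separate combinatorial argument: after restricting to primes with $\omega_y(p^2-\ell^2)$ in a window around $2\log\log y$ and with $(p^2-\ell^2)/\phi(p^2-\ell^2)$ bounded, one shows that the odd parts of the weird $m_1$'s form a Sperner family (no one divides another), so by Sperner's theorem their number is $\ll 2^{\omega(D)}/\sqrt{\omega(D)}$, a vanishing fraction of all $2^{\omega(D)}$ patterns. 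This antichain step is the missing idea; without it, your symmetry argument only applies to patterns away from the boundary $\phi(m_1)/m_1\approx\phi(m_2)/m_2$, and you have no mechanism to show the boundary has density zero.

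Two smaller points. For part~(c) the paper does not use a bare sieve but invokes a structure theorem of Graham--Holt--Pomerance classifying solutions to $\phi(n)=\phi(n+k)$; you would need something comparable. And for the comparison between $\phi(p-\ell)-\phi(p+\ell)$ and $S(p)$, the $O(\ell/p)$ drift is handled in the paper not by smallness alone but by showing $2^m\mid\phi(p\pm\ell)$ with $2^m>2\ell$ for almost all $p$ (via a Tur\'an--Kubilius bound on $\omega_y(p\pm\ell)$), so that the integer $\phi(p-\ell)-\phi(p+\ell)$ cannot land in $(-2\ell,0]$.
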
   

A curious phenomenon occurs in (c), in the sense that the decay rate relative to $\pi(x)$ depends
upon $\ell$ in a peculiar manner.  Theorem \ref{Theorem:Equality} shows that
\begin{equation*}
\# \big\{ p \leq x : \phi(p-\ell) = \phi(p+\ell) \big\} 
\,\,\ll \,\,
\begin{cases}
\dfrac{x}{(\log x)^3} & \text{if $\ell = 4^n-1$},\\[10pt]
\dfrac{x}{e^{(\log x)^{1/3}}} & \text{otherwise}.
\end{cases}
\end{equation*}
This does not appear to be an artifact of the proof since it is borne out in numerical computations
(see Table \ref{Table:Equality})
and is consistent with the Bateman--Horn conjecture.

\begin{table}\small
\begin{equation*}
\begin{array}{|c@{\hskip 8pt}c|c@{\hskip 8pt}c|c@{\hskip 8pt}c|c@{\hskip 8pt}c|c@{\hskip 8pt}c|c@{\hskip 8pt}c|c@{\hskip 8pt}c|c@{\hskip 8pt}c|}
\hline
\ell & \# & \ell & \# & \ell & \# & \ell & \# & \ell & \# & \ell & \# & \ell & \# & \ell & \# \\
\hline
 1 & 103 & 9 & 359 & 17 & 106 & 25 & 6 & 33 & 338 & 41 & 109 & 49 & 38 & 57 & 295 \\
 2 & 49 & 10 & 4 & 18 & 219 & 26 & 47 & 34 & 47 & 42 & 322 & 50 & 5 & 58 & 54 \\
 \bf 3 & \bf 201078 & 11 & 107 & 19 & 104 & 27 & 357 & 35 & 3 & 43 & 121 & 51 & 371 & 59 & 127 \\
 4 & 58 & 12 & 214 & 20 & 3 & 28 & 17 & 36 & 374 & 44 & 39 & 52 & 38 & 60 & 538 \\
 5 & 5 & 13 & 98 & 21 & 403 & 29 & 117 & 37 & 97 & 45 & 486 & 53 & 126 & 61 & 126 \\
 6 & 231 & 14 & 7 & 22 & 52 & 30 & 507 & 38 & 45 & 46 & 47 & 54 & 303 & 62 & 45 \\
 7 & 43 & \bf 15 & \bf 108772 & 23 & 136 & 31 & 98 & 39 & 380 & 47 & 124 & 55 & 2 & \bf 63 & \bf 22654 \\
 8 & 50 & 16 & 39 & 24 & 301 & 32 & 53 & 40 & 5 & 48 & 236 & 56 & 6 & 64 & 48 \\
\hline
\end{array}
\end{equation*}
\caption{The number $(\#)$ of primes $p \leq 2{,}038{,}074{,}743$ (the hundred millionth prime) for which
$\phi(p-\ell) = \phi(p+\ell)$.  This number is exceptionally
large if $\ell=4^n - 1$; see Theorem \ref{Theorem:Equality} for an explanation.}
\label{Table:Equality}
\end{table}

We first prove Theorem \ref{Theorem:Main} in the case $\ell = 1$.
This is undertaken in Section \ref{Section:Proof} and it comprises the bulk of this article.
For the sake of readability, we break the proof into several steps which we hope are easy to follow.
In Section \ref{Section:General}, we outline
the modifications necessary to treat the case $\ell \geq 2$.
This approach permits us to focus on the main ingredients that are common to
both cases, without getting sidetracked by all of the adjustments necessary
to handle the general case.

\section{Proof of Theorem \ref{Theorem:Main} for $\ell = 1$}\label{Section:Proof}

\subsection{The case of equality}
Our first job is to show that the set of primes $p$ for which $\phi(p-1) = \phi(p+1)$
has a counting function that is $o(\pi(x))$.
We need the following lemma, which generalizes
earlier work by Erd\H{o}s, Pomerance, and S\'ark\"ozy \cite{Erdos} in the case $k=1$.
The upper bound (b) in the following was strengthened in a preprint of Yamada \cite{Yamada}.

\begin{lemma}[Graham--Holt--Pomerance \cite{GHP}]\label{Lemma:GHP}
Suppose that $j$ and $j+k$ have the same prime factors.
Let $g = \gcd(j,j+k)$ and suppose that
\begin{equation}\label{eq:qr}
 \frac{jt}{g} + 1 \qquad \text{and} \qquad \frac{(j+k)t}{g} + 1
\end{equation}
are primes that do not divide $j$.
\begin{enumerate}
\item Then $\displaystyle n = j \left( \frac{(j+k)t}{g} + 1 \right)$ satisfies $\phi(n) = \phi(n+k)$.
\item For $k$ fixed and sufficiently large $x$,
the number of solutions $n \leq x$ to $\phi(n) = \phi(n+k)$ that are not of the form above
is less than $x/\exp( (\log x)^{1/3})$.
\end{enumerate}
\end{lemma}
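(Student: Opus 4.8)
The plan is to obtain (a) by a direct computation and (b) from a sieve analysis of the factorization patterns of $n$ and $n+k$; part (b) is the substantial half --- it is precisely the theorem of \cite{GHP}, generalizing Erd\H{o}s--Pomerance--S\'ark\"ozy --- so I sketch its strategy rather than reproduce it.

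For (a), put $P = jt/g + 1$ and $Q = (j+k)t/g + 1$, so that $n = jQ$. A one-line manipulation,
\[
n + k \;=\; jQ + k \;=\; \frac{j(j+k)t}{g} + j + k \;=\; (j+k)\left(\frac{jt}{g} + 1\right) \;=\; (j+k)P ,
\]
identifies $n+k$. Since $Q$ is prime and $Q \nmid j$, we have $\gcd(j,Q) = 1$, so $\phi(n) = \phi(j)\phi(Q) = \phi(j)\cdot(j+k)t/g$. Likewise $P$ is prime, and $P \nmid j+k$: otherwise $P$ would be a prime factor of $j+k$, hence --- because $j$ and $j+k$ have the same prime factors --- a prime factor of $j$, contradicting $P \nmid j$. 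Thus $\gcd(j+k,P) = 1$ and $\phi(n+k) = \phi(j+k)\phi(P) = \phi(j+k)\cdot jt/g$. The identity $\phi(n) = \phi(n+k)$ therefore reduces to $\phi(j)(j+k) = \phi(j+k)j$, i.e.\ to $\phi(j)/j = \phi(j+k)/(j+k)$, and this holds because $\phi(m)/m = \prod_{p\mid m}(1-1/p)$ depends only on the set of primes dividing $m$, which is the same for $j$ and $j+k$ by hypothesis.

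For (b), fix $k$ and set $y = \exp((\log x)^{2/3})$. Given a solution $n \le x$ of $\phi(n) = \phi(n+k)$, write $n = aA$ and $n+k = bB$, where $a$ and $b$ are the $y$-smooth parts and $A$, $B$ are built from primes exceeding $y$. One first discards the $n$ for which $a$ or $b$ exceeds $\exp((\log x)^{1/2})$, and those for which $A$ or $B$ is not squarefree: each such exceptional set has counting function $\ll x/\exp((\log x)^{1/3})$, by de Bruijn's estimate for smooth numbers together with a Brun-sieve bound for integers with a large smooth divisor. For the surviving $n$ the equation becomes $\phi(a)\prod_{P\mid A}(P-1) = \phi(b)\prod_{Q\mid B}(Q-1)$; analyzing how the large primes can cancel --- using in particular that the primes $P \le x$ for which $P-1$ is $y$-smooth form a sparse set --- forces $A$ and $B$ each to be a single prime, say $A = Q$ and $B = P$. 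The residual divisibilities then pin down $n = a\,((a+k)t/\gcd(a,k) + 1)$ with $a$ and $a+k$ sharing the same prime factors, which is the parametrization of (a) with $j = a$.

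The step I expect to be the main obstacle is excluding, within (b), the possibility that $A$ or $B$ has two or more prime factors exceeding $y$: one must bound the number of $n \le x$ with $P_1P_2 \mid n$, $P_1, P_2 > y$, and $\phi(n) = \phi(n+k)$, uniformly in the pair $(P_1,P_2)$, and then sum over the many admissible pairs. The per-pair estimate is a routine application of Brun's sieve, but keeping the total below $x/\exp((\log x)^{1/3})$ is exactly what forces the smoothness exponent $2/3$ in the choice of $y$; this bookkeeping, carried out in \cite{GHP}, is the crux of the lemma, and for our purposes it suffices to invoke the statement as given.
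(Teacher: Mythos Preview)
The paper does not prove this lemma at all: it is quoted verbatim from \cite{GHP} (with the remark that (b) sharpens \cite{Erdos} and has since been improved by Yamada), and is used as a black box. Your write-up therefore goes \emph{beyond} the paper rather than paralleling it. Your verification of (a) is correct --- the identity $n+k=(j+k)P$ and the observation that $P\nmid j \Rightarrow P\nmid j+k$ (same prime support) are exactly what is needed --- and is a useful addition, since the paper offers none. For (b) you likewise end by invoking \cite{GHP}, which is all the paper does; your informal outline of the smooth/rough decomposition and the reduction to $A,B$ prime is a fair thumbnail of the Erd\H{o}s--Pomerance--S\'ark\"ozy/Graham--Holt--Pomerance strategy, but since neither you nor the paper actually carries it out, there is nothing to compare beyond noting that both defer to the cited source.
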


Consider the case $k=2$ and $n = p-1$, in which $p$ is prime.  
Suppose that $j$ and $j+2$ have the same prime factors and let $g = \gcd(j,j+2)$.
Let us also suppose that $t$ is a positive integer such that
\begin{equation*}
\frac{jt}{g}+1\qquad \text{and}\qquad \frac{(j+2)t}{g}+1
\end{equation*}
are primes and 
\begin{equation*}
p-1=j\left(\frac{(j+2)t}{g}+1\right).
\end{equation*}
Since $j$ and $j+2$ have the same prime factors, they are both powers of $2$.
Then $j=2$ and $j+2 = 4$, so $g = 2$.  Consequently,
\begin{equation}\label{eq:PT}
t+1, \qquad 2t+1,\quad \text{and} \quad 4t+3
\end{equation}
are prime.  Reduction modulo $3$ reveals that at least one of them
is a multiple of $3$.  The only prime triples produced by \eqref{eq:PT} are $(2,3,7)$ and $(3,5,11)$,
in which $r=1$ and $r=2$, respectively.  Consequently,
\begin{equation}\label{eq:PP11Pix}
\#\big\{ p \leq x : \phi(p-1) = \phi(p+1) \big\} < \frac{x}{\exp( (\log x)^{1/3})}  + 2 = o(\pi(x)).
\end{equation}
This is Theorem \ref{Theorem:Main}.c in the case $\ell = 1$.

%%%%%%%%%%%%%%%%%%%%%%
\subsection{A comparison lemma}

Instead of comparing $\phi(p-1)$ and $\phi(p+1)$ directly, it is more convenient to compare the related quantities
\begin{equation}\label{eq:DiffExp}
\frac{\phi(p-1)}{p-1} = \prod_{q|(p-1)} \left( 1 - \frac{1}{q} \right)
\qquad\text{and}\qquad
\frac{\phi(p+1)}{p+1} = \prod_{q|(p+1)} \left( 1 - \frac{1}{q} \right),
\end{equation}
in which $q$ is prime.  Let 
\begin{equation}\label{eq:Sp}
S(p) := \frac{\phi(p-1)}{p-1} - \frac{\phi(p+1)}{p+1},
\end{equation}
which we claim is nonzero for $p \geq 5$.  
Let $P(n)$ denote the largest prime factor of $n$. Since 
\begin{equation}\label{eq:qqpn}
\frac{\phi(n)}{n} = \prod_{q|n} \bigg(\frac{q-1}{q}\bigg),
\end{equation}
it follows that $P(n)$ is the largest prime factor of the denominator of $\phi(n)/n$.
Since $\gcd(p-1,p+1) = 2$, the condition $S(p) = 0$ implies that $p-1$ and $p+1$ are both
powers of $2$.  Thus, $S(p) = 0$ holds only for $p=3$.

Something similar to the following lemma is in \cite{GKL}, although there
it was assumed that $p+2$ is also prime.  The adjustment
for $\ell \geq 2$ is discussed in Section \ref{Section:General}.

\begin{lemma}[Comparison Lemma]\label{Lemma:Sp}
The set of primes $p$ for which
$\phi(p-1) - \phi(p+1)$ and $S(p)$ 
have the same sign has counting function asymptotic to $\pi(x)$ as $x \to \infty$.
\end{lemma}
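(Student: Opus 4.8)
The plan is to avoid comparing $\phi(p-1)$ and $\phi(p+1)$ with $S(p)$ directly, and instead to pass to the rescaled quantity $(p^2-1)S(p)$, which is an \emph{integer}. Clearing denominators in \eqref{eq:Sp} gives $(p^2-1)S(p) = (p+1)\phi(p-1) - (p-1)\phi(p+1)$, and regrouping the right-hand side yields $(p^2-1)S(p) = p\big(\phi(p-1)-\phi(p+1)\big) + \big(\phi(p-1)+\phi(p+1)\big)$. Set $D = \phi(p-1)-\phi(p+1)\in\Z$ and $E = \phi(p-1)+\phi(p+1)$, so that $(p^2-1)S(p) = pD+E$; since $p^2-1>0$, the sign of $S(p)$ equals the sign of $pD+E$.

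First I would bound $E$. Since $p-1$ and $p+1$ are both even, $\phi(p-1)\leq(p-1)/2$ and $\phi(p+1)\leq(p+1)/2$, so $E\leq p$, with equality only if $p-1$ and $p+1$ are both powers of $2$, i.e.\ $p=3$. Hence $1\leq E\leq p-1$ for every prime $p\geq5$. The point is that this makes $E$ too small to affect the sign of $pD$: if $D\geq1$ then $pD+E\geq p+1>0$, and if $D\leq-1$ then $pD+E\leq-p+(p-1)=-1<0$. Thus for every prime $p\geq5$ with $D\neq0$, the numbers $S(p)$ and $\phi(p-1)-\phi(p+1)$ have the same sign.

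It remains to control the exceptional primes. The argument above shows that the set in the statement contains every prime $p\geq5$ with $\phi(p-1)\neq\phi(p+1)$; the odd primes outside this set are $p=3$ together with those $p$ for which $\phi(p-1)=\phi(p+1)$. By the equality estimate \eqref{eq:PP11Pix}, the number of the latter with $p\leq x$ is at most $x/\exp\big((\log x)^{1/3}\big)+O(1)=o(\pi(x))$. Therefore the counting function of the ``same sign'' set is $\pi(x)-o(\pi(x))\sim\pi(x)$, which is the assertion of the lemma.

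I do not expect a genuine obstacle here. The only thing requiring care is the initial observation that $(p^2-1)S(p)$ is an integer which splits cleanly as $pD+E$ with the additive term $E$ smaller in absolute value than $p$ (hence smaller than $p|D|$ whenever $D\neq0$); once that is in place, the sign comparison is immediate. The sole arithmetic input of substance — a bound for how often $\phi(p-1)=\phi(p+1)$ — has already been provided by \eqref{eq:PP11Pix}.
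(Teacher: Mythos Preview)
Your proof is correct and follows essentially the same approach as the paper: both clear denominators to write $(p^2-1)S(p)=pD+E$ with $D=\phi(p-1)-\phi(p+1)$ and $E=\phi(p-1)+\phi(p+1)\leq p$, then use this to force the signs of $S(p)$ and $D$ to agree whenever $D\neq 0$, appealing to \eqref{eq:PP11Pix} for the equality case. The only cosmetic difference is that the paper uses the evenness of $\phi$ to rule out $D=-1$, whereas you use the strict bound $E\leq p-1$ for $p\geq 5$ to the same effect.
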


\begin{proof}
In light of \eqref{eq:PP11Pix},
it suffices to show that
\begin{equation}\label{eq:Trouble1}
\phi(p-1) \,> \,\phi(p+1)
\quad\iff\quad
\frac{\phi(p-1)}{p-1}\, > \,\frac{\phi(p+1)}{p+1}
\end{equation}
on a set of full density in the primes.
The forward direction is clear, so we focus on the reverse.
If the inequality on the right-hand side of \eqref{eq:Trouble1} holds, then
\begin{align}
0 
&< p \big(\phi(p-1) - \phi(p+1)\big) + \phi(p-1) + \phi(p+1) \nonumber\\
&\leq p \big(\phi(p-1) - \phi(p+1)\big) + \tfrac{1}{2}(p-1) + \tfrac{1}{2}(p+1) \nonumber\\
&= p \big(\phi(p-1) - \phi(p+1) + 1\big)\label{eq:GoingWrong}
\end{align}
because $p-1$ and $p+1$ are even.
Since $\phi(n)$ is even for $n \geq 3$, it follows that $\phi(p-1) - \phi(p+1) \geq 0$.
Now appeal to \eqref{eq:PP11Pix} to see that strict inequality holds on a set of
full density in the primes.
\end{proof}

%%%%%%%%%%%%%%%%%%%%
\subsection{Some preliminaries}
In our later study of the quantity $S(p)$, we need to 
avoid four classes of inconvenient primes.
To make the required estimates, we need some notation.
Let $x$ be large, let $y:=\log\log x$, and define
\begin{equation}\label{eq:Ly}
L_y := \lcm \{ m: m\leq y\}.
\end{equation}
Then $L_y= e^{\psi(y)}$, in which
\begin{equation*}
\psi (y):=\sum_{p^k\leq y}\log p 
\end{equation*}
is Chebyshev's function. 
Since the Prime Number Theorem
asserts that $\psi(y) = y + o(y)$ as $y \to \infty$, we obtain
\begin{equation}\label{eq:Lxy2}
L_y = e^{y+o(y)} < e^{2y}=(\log x)^2
\end{equation}
for sufficiently large $x$. 
For a positive integer $n$, 
let $D_y(n)$ denote the largest divisor of $n$ that is \emph{$y$-smooth}:
\begin{equation}\label{eq:Dyn}
D_y(n):=\max\big\{d: \text{$d| n$ and $P(d)\leq y$}\big\}.
\end{equation}

On occasion, we will need the Brun sieve.
Let $f_1,f_2,\ldots,f_m$ be a collection of distinct irreducible polynomials 
with positive leading coefficients.  An integer $n$ is \emph{prime generating} for 
this collection if each $f_1(n), f_2(n),\ldots, f_m(n)$ is prime. 
Let $G(x)$ denote
the number of prime-generating integers at most $x$ and
suppose that $f = f_1f_2\cdots f_m$ does not vanish identically modulo any prime.
As $x \to \infty$,
\begin{equation*}
G(x)
\ll  \frac{x}{(\log x)^m}, 
\end{equation*}
in which the implied constant depends only upon $m$ and $\prod_{i=1}^m \deg f_i$
\cite[Thm.~3, Sect.~I.4.2]{Tenenbaum}.
The upper bound obtained in this manner has the same order of magnitude
as the prediction furnished by the Bateman--Horn conjecture \cite{Bateman}.

\subsection{Inconvenient primes of Type 1}
Let 
\begin{equation*}
\E_1(x) := \big\{p\leq x: D_y(p^2-1)\nmid L_y \big\}.
\end{equation*}
We will prove that
\begin{equation}\label{eq:E1}
     \#\E_1(x)\ll \frac{x}{y^{1/2} \log x}=o(\pi(x)).
\end{equation}

Suppose that $p\in \E_1(x)$.
Then \eqref{eq:Dyn} and the definition of $\E_1(x)$ yield 
a divisor $d$ of $p^2-1$ such that $P(d)\leq y$ and $d\nmid L_y$. 
These conditions provide a prime power $q^b$ with least exponent $b$ such that
\begin{equation}\label{eq:qPby}
q^b | d,  \qquad q=P(q^b)\leq y,  \quad \text{and} \quad y < q^b .
\end{equation}
Indeed, if every $y$-smooth prime power $q^b$ that divides $d$
satisfies $q^b \leq y$, then \eqref{eq:Ly} would imply that $d|L_y$,
a contradiction.  We also observe that
the second two conditions in \eqref{eq:qPby} ensure that $b\geq 2$. 

We claim that for large $x$ either $p-1$ or $p+1$ has a prime power divisor $q^c$ with $c\geq 2$ in the interval $[y/2, y^2]$.
Since
\begin{equation*}
p^2-1=(p-1)(p+1) \quad \text{and} \quad  \gcd(p-1,p+1)=2,
\end{equation*}
it follows that $d/(2,d)$ divides one of
$p+1$, $p-1$.
There are two cases to consider.
\begin{itemize}

\item If $q = 2$, then $2^{b-1}$ divides one of $p-1$, $p+1$.  
Since we aim to prove \eqref{eq:E1} as $x \to \infty$,
we may assume that $b \geq 3$ since for $b=2$, the third statement in \eqref{eq:qPby}
implies $\log \log x < 4$.
Next observe that \eqref{eq:qPby} implies that $y/2 < 2^{b-1}$.
The minimality of $b$ in \eqref{eq:qPby} 
ensures that $2^{b-1} \leq y< y^2$.  Thus,
$2^{b-1} \in [\frac{y}{2},y^2]$ has $b-1\geq 2$ (since $b \geq 3$), and divides one of $p-1$, $p+1$.

\item If $q$ is odd, then $q^{b}$ divides $p-1$ or $p+1$.
The minimality of $b$ ensures that
$q^{b-1} \leq y$ and the second statement in \eqref{eq:qPby} yields
\begin{equation*}
\frac{y}{2} < y < q^{b} = q^{b-1} q \leq y^2,
\quad \text{and hence} \quad 
q^{b} \in [ \tfrac{y}{2}, y^2]
\quad\text{with $b \geq 2$}.
\end{equation*}
\end{itemize}
For large $x$, we conclude that one of $p-1$, $p+1$ has a prime power divisor $q^c$ with $c\geq 2$ in $[y/2,y^2]$. 

Let $\pi_{s}(x)$ denote the number of prime powers $p^a$ with $a\geq 2$ that are at most $x$.  Since $p^a\leq x$ with $a\geq 2$ implies either
$a=2$ and $p\leq x^{1/2}$, or $a\in [3,(\log x/\log 2)]$ and $p\leq x^{1/3}$, the Prime Number Theorem implies that
\begin{equation*}
\pi_{s}(x) =\pi({\sqrt{x}})+O\big(\pi(x^{1/3})\log x\big)=(2+o(1))\frac{x^{1/2}}{\log x}
\end{equation*}
as $x\to\infty$.  
Let $\pi(x;m,k)$ denote the number of primes at most $x$ that are congruent to $k$ modulo $m$.
Then the Brun sieve implies
\begin{align*}
\# \E_1(x) 
& \leq  \sum_{\substack{q^b\in [y/2,y^2]\\ b \geq 2}} \pi(x;q^b, 1) + \sum_{\substack{q^b\in [y/2,y^2]\\ b \geq 2}} \pi(x;q^b, -1)\\
&\ll  \sum_{\substack{q^b\in [y/2,y^2]\\ b \geq 2}} \frac{x}{\phi(q^b) \log x}
 \ll  \sum_{\substack{q^b\in [y/2,y^2]\\ b\geq 2}} \frac{2x}{q^b \log x}\\
& \ll  \frac{x}{\log x} \sum_{\substack{q^b\in [y/2,y^2]\\ b\geq 2}} \frac{1}{q^b}\\
& \ll  \frac{x}{\log x}  \int_{y/2}^{y^2} \frac{d\pi_s(t)}{t} \leq   \frac{x}{\log x}  \int_{y/2}^{\infty} \frac{d\pi_s(t)}{t}  \\
& \ll  \frac{x}{\log x} \left( \frac{\pi_s(t)}{t} \Bigg|_{y/2}^{\infty} + \int_{y/2}^{\infty} \frac{\pi_s(t)\,dt}{t^2}\right)\\
& \ll  \frac{x}{\log x} \left( \frac{(y/2)^{1/2}}{(y/2) \log(y/2)}+ \int_{y/2}^{\infty} t^{-3/2} (\log t)^{-1/2}\,dt\right)\\
& \ll   \frac{x}{y^{1/2} (\log y) (\log x)}
=o(\pi(x))
\end{align*} 
as $x\to\infty$.  This is the desired estimate \eqref{eq:E1}.

%%%%%%%%%%%%%%%%%%%%%%
\subsection{Inconvenient primes of Type 2}
Fix a large $x$ and
define the function
\begin{equation*}
    h_y(n):=\sum_{\substack{r|  n\\ r> y}} \frac{1}{r},
\end{equation*}
in which $r$ is prime.  Let
\begin{equation*}
\E_2(x)
:=
\left\{p\leq x\,:\, h_y(p-1)>\frac{1}{y{\sqrt{\log y}}} \quad \text{or} \quad  h_y(p+1)>\frac{1}{y{\sqrt{\log y}}}\right\}.
\end{equation*}
We claim that
\begin{equation}\label{eq:E2}
     \#\E_2(x)\ll \frac{\pi(x)}{\sqrt{\log y}}=o(\pi(x))
\end{equation}
as $x\to\infty$. 
This will follow from an averaging argument similar to \cite[Lem.~3]{LuPo}. 

The Brun sieve with $f(t) = rt\pm 1$ provides
\begin{equation*}
 \pi(x;r,\pm 1)   \ll  \frac{\pi(x)}{r} \qquad \text{for $y\leq r\leq (\log x)^3$}
\end{equation*}
uniformly for $r$ in the specified range \cite[Thm.~3, Sect.~I.4.2]{Tenenbaum}.
We use the  trivial estimate
\begin{equation*}
 \pi(x;r,\pm 1)  \leq  \frac{x}{r}  \qquad \text{for $(\log x)^3\leq r\leq x$}.
\end{equation*}
We also require the upper bound
\begin{equation}\label{eq:yly}
\sum_{r \geq y} \frac{1}{r^2} 
= \int_y^{\infty}\frac{d\pi(t)}{t^2} = \frac{\pi(t)}{t^2} \Bigg|_y^{\infty} + 2 \int_y^{\infty} \frac{\pi(t)\,dt}{t^3}
\ll \frac{\pi(y)}{y^2} \ll \frac{1}{y \log y} ,
\end{equation}
which is afforded by the Prime Number Theorem.  
As $x \to \infty$, we have
\begin{align*}
\sum_{p\leq x} h_y(p \pm 1) 
&=  \sum_{p\leq x}\, \sum_{\substack{r|  (p\pm 1)\\ r >  y}} \frac{1}{r} 
=  \sum_{y\leq r\leq x} \frac{1}{r}\sum_{\substack{p\leq x\\ r|  (p\pm 1)}} 1 \\
&=  \sum_{y\leq r\leq x} \frac{\pi(x;r,\mp 1)}{r} \\
&  \ll   \sum_{y\leq  r\leq (\log x)^3} \frac{\pi(x)}{r^2} \quad +\sum_{(\log x)^3<r\leq x} \frac{x}{r^2}\\
&  \ll   \pi(x)\sum_{r\geq y} \frac{1}{r^2}\quad +\quad x\sum_{r\geq (\log x)^3} \frac{1}{r^2} \\
& \ll   \frac{\pi(x)}{y\log y} +\frac{x}{(\log x)^3 }
 \ll   \frac{\pi(x)}{y\log y} +\frac{\pi(x)}{(\log x)^2 }\\
&\ll \frac{\pi(x)}{y \log y }.
\end{align*}
Consequently,
\begin{align*}
\frac{\#\E_2(x)}{y \sqrt{\log y}}
&\leq \sum_{\substack{p \leq x \\ h_y(p-1)> \frac{1}{y{\sqrt{\log y}}}}} \!\!\!\!\!\!\!\!\!\!\!\! h_y(p-1)
\quad+ \sum_{\substack{p \leq x \\ h_y(p+1)> \frac{1}{y{\sqrt{\log y}}}}} \!\!\!\!\!\!\!\!\!\!\!\!h_y(p+1) \\
&\leq \sum_{p \leq x} h_y(p-1) + \sum_{p \leq x} h_y(p+1)\\
&\ll \frac{\pi(x)}{y \log y },
\end{align*}
which implies \eqref{eq:E2}.

%%%%%%%%%%%%%%%%%%%%%%
\subsection{Inconvenient primes of Type 3}

Let $\omega(n)$ denote the number of distinct prime factors of $n$ and
$\omega_y(n)$ the number of distinct prime factors $q\leq y$ of $n$.  Define
\begin{equation*}
\E_3(x) = \big\{ p \leq x \,:\, \omega_y(p^2-1) \notin [ 1.5 \log \log y, 2.5 \log\log y] \big\}.
\end{equation*}
We claim that
\begin{equation}\label{eq:E3}
\# \E_3(x) \ll \frac{\pi(x)}{ \log \log y} = o(\pi(x))
\end{equation}
as $x \to \infty$.   
Although this is essentially a result of Erd\H{o}s \cite{ErdosSolo},
we sketch a simpler proof that is easily generalized
since we later need to handle
$p^2-\ell^2$ instead of $p^2-1$.

If $p \in \E_3(x)$, then for large $x$ we have
\begin{equation*}
\omega_y(p^2-1) +1 = \omega_y(p-1) + \omega_y(p+1)
\end{equation*}
because $\gcd(p-1,p+1) = 2$.  
Thus, either
\begin{equation*}
\min\{\omega_y(p-1),\omega_y(p+1)\}  \leq  0.75\log\log y+1 \leq 0.8 \log \log y
\end{equation*}
or
\begin{equation*}
\max\{\omega_y(p-1),\omega_y(p+1)  >  1.25\log\log y > 1.2 \log \log y
\end{equation*}
for sufficiently large $x$.  
Without loss of generality, we may suppose that
\begin{equation}\label{eq:WLOGpm}
\omega_y(p-1) \leq 0.8 \log \log y
\qquad \text{or} \qquad
\omega_y(p-1) \geq 1.2 \log \log y.
\end{equation}
Then
\begin{equation}\label{eq:Rlly}
0.04(\log \log y)^2 \leq \big( \omega_y(p-1) - \log \log y)^2
\end{equation}
and similarly if $p+1$ occurs in \eqref{eq:WLOGpm}.

We next require the following ``Tur\'an--Kubilius''-type result;
see \cite[Lem.~2]{Motohashi}, \cite[\S V.5, 1, p.~159]{Sandor}.   
To study $\phi(p \pm \ell)$ for $\ell \neq 1$ requires a slight generalization; see
Lemma \ref{Lemma:MotohashiGeneral} in Section \ref{Section:General} for a statement and sketch of the proof.

\begin{lemma}[Motohashi]\label{Lemma:Motohashi}
$\displaystyle\sum_{p \leq x} ( \omega_y(p \pm 1) - \log \log y)^2 = O( \pi(x) \log \log y)$.
\end{lemma}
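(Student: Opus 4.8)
The plan is to establish the Motohashi-type estimate $\sum_{p\le x}(\omega_y(p\pm 1)-\log\log y)^2 = O(\pi(x)\log\log y)$ by expanding the square and estimating the three resulting sums via the Brun sieve. Write $\omega_y(p\pm 1) = \sum_{q\le y} \mathbf{1}_{q\mid p\pm 1}$, where $q$ runs over primes. Expanding,
\begin{equation*}
\sum_{p\le x}\big(\omega_y(p\pm 1)-\log\log y\big)^2
= \sum_{p\le x}\omega_y(p\pm 1)^2 - 2\log\log y\sum_{p\le x}\omega_y(p\pm 1) + \pi(x)(\log\log y)^2.
\end{equation*}
The first step is to handle the linear term: $\sum_{p\le x}\omega_y(p\pm 1) = \sum_{q\le y}\pi(x;q,\mp 1)$. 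Since each modulus $q\le y = \log\log x$ is tiny relative to $x$, one can invoke either the Brun sieve bound $\pi(x;q,\mp 1)\ll \pi(x)/(q-1)$ uniformly in this range, or simply the classical Brun--Titchmarsh / Mertens-type estimate, to get $\sum_{q\le y}\pi(x;q,\mp 1) = \pi(x)\big(\log\log y + O(1)\big)$ after summing $\sum_{q\le y}1/(q-1) = \log\log y + O(1)$ by Mertens. (For the very smallest primes one may need the trivial bound $\pi(x;q,\mp1)\le x/q + 1$, which is still $\ll\pi(x)/q$ in the relevant range once $x$ is large.)

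The second step is the quadratic term. Squaring the indicator sum gives the diagonal $\sum_{q\le y}\mathbf{1}_{q\mid p\pm 1}$ (since $\mathbf{1}^2=\mathbf{1}$) plus the off-diagonal $\sum_{q_1\ne q_2\le y}\mathbf{1}_{q_1q_2\mid p\pm 1}$. The diagonal contributes exactly $\sum_{p\le x}\omega_y(p\pm 1) = \pi(x)(\log\log y + O(1))$, already controlled. For the off-diagonal, $q_1q_2\mid p\pm 1$ is a single congruence condition modulo $q_1q_2 \le y^2 = (\log\log x)^2$, so the Brun sieve (applied to $f(t)=q_1q_2 t \mp 1$, whose discriminant-type quantity is harmless) yields $\pi(x;q_1q_2,\mp 1)\ll \pi(x)/(\phi(q_1q_2)) \ll \pi(x)/(q_1q_2)$ uniformly for $q_1q_2$ in this range. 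Summing,
\begin{equation*}
\sum_{\substack{q_1\ne q_2\le y}}\pi(x;q_1q_2,\mp 1) \ll \pi(x)\bigg(\sum_{q\le y}\frac1q\bigg)^2 \ll \pi(x)(\log\log y)^2.
\end{equation*}
The third step is to assemble these: the quadratic term is $\pi(x)(\log\log y)^2 + O(\pi(x)\log\log y)$, the cross term is $-2\pi(x)(\log\log y)^2 + O(\pi(x)\log\log y)$, and the constant term is $+\pi(x)(\log\log y)^2$; the leading $(\log\log y)^2$ terms cancel, leaving $O(\pi(x)\log\log y)$, as claimed. One should carry out the $+1$ and $-1$ cases together (or in parallel), noting the only difference is the residue class $\mp 1$, which does not affect any of the bounds.

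The main obstacle is ensuring that the Brun sieve estimates hold \emph{uniformly} for all moduli up to $y^2 = (\log\log x)^2$, including the implied constants being independent of $q_1,q_2$; this is exactly the content of the quoted \cite[Thm.~3, Sect.~I.4.2]{Tenenbaum}, since the degree and number of the linear forms are bounded (here $m=1$, $\deg f = 1$), so the implied constant is absolute. A secondary subtlety is that $\sum_{q\le y}1/(q-1)$ versus $\sum_{q\le y}1/q$ differ by $O(1)$, which is absorbed harmlessly, and that the smallest primes require the trivial bound rather than the sieve — but since $\log\log x \to \infty$ slowly, for large $x$ all $q\le y$ satisfy $q^2 < x$ comfortably and $x/q$ dominates, so $\pi(x;q,\mp1)\ll \pi(x)/q$ throughout. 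Everything else is a routine application of Mertens' theorem and partial summation.
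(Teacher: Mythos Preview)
Your overall architecture---expand the square, compute the first and second moments of $\omega_y(p\pm 1)$, and combine so that the $(\log\log y)^2$ terms cancel---is exactly the route the paper takes (see the sketch for Lemma~\ref{Lemma:MotohashiGeneral}). The gap is in the tool you invoke for the moment computations. The Brun sieve (and Brun--Titchmarsh) give only \emph{upper bounds} $\pi(x;q,\mp1)\ll \pi(x)/\phi(q)$ and $\pi(x;q_1q_2,\mp1)\ll \pi(x)/\phi(q_1q_2)$. From these you can conclude at best that the off-diagonal sum is $\ll \pi(x)(\log\log y)^2$ and the linear sum is $\ll \pi(x)\log\log y$; you cannot conclude that the quadratic term equals $\pi(x)(\log\log y)^2 + O(\pi(x)\log\log y)$ or that the linear term equals $\pi(x)\log\log y + O(\pi(x))$. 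Without those asymptotics the three $(\log\log y)^2$ contributions do not cancel, and you are left with $O(\pi(x)(\log\log y)^2)$ rather than $O(\pi(x)\log\log y)$. In your assembly step you silently upgraded ``$\ll$'' to ``$= \text{main term} + O(\cdot)$'', which the sieve does not justify.

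The fix is to replace the Brun sieve by the Siegel--Walfisz theorem, as the paper does. All moduli in play are at most $y^2=(\log\log x)^2$, well within any fixed power of $\log x$, so Siegel--Walfisz gives
\[
\pi(x;d,\mp1)=\frac{\pi(x)}{\phi(d)}+O\big(xe^{-C\sqrt{\log x}}\big)
\]
uniformly for $d\le y^2$. Summing this over $q\le y$ (respectively over pairs $q_1\ne q_2\le y$) and applying Mertens yields the genuine asymptotics $\sum_{p\le x}\omega_y(p\pm1)=\pi(x)\log\log y+O(\pi(x))$ and $\sum_{p\le x}\omega_y(p\pm1)^2=\pi(x)(\log\log y)^2+O(\pi(x)\log\log y)$, after which your expansion gives the claimed bound. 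The ``main obstacle'' you identified (uniformity of implied constants) is real but secondary; the primary obstacle is that an upper-bound sieve cannot produce a two-sided asymptotic.
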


Now return to \eqref{eq:Rlly},
apply Lemma \ref{Lemma:Motohashi}, and conclude that
\begin{align*}
 0.04( \log \log y )^2\#\E_3(x)
&\leq \sum_{p \in \E_3(x)} ( \omega_y(p-1) - \log \log y)^2 + ( \omega_y(p+1) - \log \log y)^2 \\
&\leq \sum_{p \leq x} ( \omega_y(p-1) - \log \log y)^2 + ( \omega_y(p+1) - \log \log y)^2 \\
&= O( \pi(x) \log \log y).
\end{align*}
This yields the desired estimate \eqref{eq:E3}.

%%%%%%%%%%%%%%%%%%%%%%
\subsection{Inconvenient primes of Type 4}
Let 
\begin{equation}\label{eq:E4def}
\E_4(x)=\Bigg\{p\leq x \,:\, \frac{p^2- 1}{\phi(p^2- 1)}>(\log y)^{1/3}\Bigg\}.
\end{equation}
We claim that
\begin{equation}\label{eq:E4}
\#\E_4(x)\ll \frac{\pi(x)}{(\log y)^{1/6}} = o(\pi(x))
\end{equation}
as $x \to \infty$.  
Since $\phi(p-1) \phi(p+1) \leq \phi(p^2-1)$, the condition
\begin{equation*}
\frac{p^2-1}{\phi(p^2-1)} > (\log y)^{1/3}
\end{equation*}
implies that 
\begin{equation*}
\frac{p+1}{\phi(p+1)} > (\log y)^{1/6} 
\quad \text{or} \quad
\frac{p-1}{\phi(p-1)} > (\log y)^{1/6}.
\end{equation*}
A standard application of the Siegel--Walfisz theorem yields
\begin{equation}\label{eq:Prachar}
\sum_{p\leq x} \frac{p-1}{\phi(p-1)} \ll \pi(x);
\end{equation}
see \cite[\S I.28, 1b, p~30]{Sandor} or \cite{Prachar}.
The same holds with $p-1$ replaced by $p+1$;
the adjustments necessary to handle $p \pm \ell$ are discussed in Section \ref{Section:General}.
Thus,
\begin{align*}
\E_4(x) (\log y)^{1/6}
&\leq \sum_{p \in \E_4(x)} \left( \frac{p-1}{\phi(p-1)} + \frac{p+1}{\phi(p+1)} \right) \\
&\leq \sum_{p \leq x} \left( \frac{p-1}{\phi(p-1)} + \frac{p+1}{\phi(p+1)} \right) \\
&\ll \pi(x),
\end{align*}
which yields \eqref{eq:E4}.

\subsection{Convenient primes}
Throughout the remainder of the proof, 
we let $5\leq p\leq x$, in which $x$ is large, and
we suppose that 
\begin{equation*}
p \notin
\E_1(x)\cup \E_2(x)\cup \E_3(x)\cup \E_4(x). 
\end{equation*}
We say that such a prime is \emph{convenient}. 
Because $\gcd(p-1,p+1)=2$, we have
\begin{equation}\label{eq:ConvenientFactor}
D_y(p^2-1)=m_1m_2,
\end{equation}
in which
\begin{equation}\label{eq:ppmmnn}
p-1=m_1n_1,\qquad
p+1=m_2n_2,\qquad
\gcd(m_1,m_2)=2, 
\end{equation}
every prime factor of $m_1 m_2$ is at most $y$, 
and every prime factor of $n_1n_2$ is greater than $y$. 
In particular, $\gcd(m_1,n_1) = \gcd(m_2,n_2)= 1$.

We claim that
\begin{equation}\label{eq:pmmpmm}
\frac{\phi(m_1)}{m_1}\neq \frac{\phi(m_2)}{m_2}
\end{equation}
for sufficiently large $x$.  In light of \eqref{eq:qqpn},
it follows that $P(n)$ is the largest prime factor of the denominator of $\phi(n)/n$.
If $\phi(m_1)/m_1=\phi(m_2)/m_2$, then $P(m_1)=P(m_2)=2$
since $\gcd(m_1,m_2)=2$.  Thus,
$m_1$ and $m_2$ are powers of $2$ and 
\begin{align*}
1
&= \omega(m_1) + \omega(m_2) - 1\\
&= \omega(m_1 m_2) 
= \omega(D_y(p^2-1))  \\
&= \omega_y(p^2-1) \in\big[1.5\log\log y,2.5\log\log y\big]
\end{align*}
because $p \notin \E_3(x)$.  
This is a contradiction for $x \geq 10^{483}$.

For convenient $p\leq x$, we have
\begin{equation*}
S(p)=\frac{\phi(p-1)}{p-1}-\frac{\phi(p+1)}{p+1}
=\frac{\phi(m_1) \phi(n_1)}  {m_1n_1}-\frac{\phi(m_2)\phi(n_2)}{m_2 n_2}.
\end{equation*}
We note that $S(p) \neq 0$ because otherwise $P(p-1) =P(p+1)$ by \eqref{eq:qqpn}.
Since $\gcd(p-1,p+1) = 2$, it would follow that $p-1$ and $p+1$ are powers of $2$,
which occurs only for $p =3$.
Lemma \ref{Lemma:Sp} ensures that $S(p)$ has the same sign as 
$\phi(p-1) - \phi(p+1)$ on a set of full density in the primes.

Since $p \notin \E_2(x)$, for large $x$ we may use the inequality
\begin{equation*}
|t + \log (1-t) | \leq |t|^2, \qquad \text{for $|t| \leq \tfrac{1}{2}$},
\end{equation*}
to obtain
\begin{align*}
\frac{\phi(n_1)}{n_1} 
& =  \prod_{\substack{r|  (p- 1)\\ r>y}} \left(1-\frac{1}{r}\right) 
= \exp\Bigg(\sum_{\substack{r|  (p- 1)\\ r>y}} \log\left(1-\frac{1}{r}\right)\Bigg)\\ 
&= \exp\Bigg(-\sum_{\substack{r|  (p- 1)\\ r>y}} \frac{1}{r}+ O\bigg( \bigg(\sum_{\substack{r|  (p- 1)\\ r>y}} \frac{1}{r} \bigg)^2\bigg)  \Bigg) \\
&= \exp\Big( -h_y(p-1) +O\big(h_y(p-1)^2\big)\Big) \\
&= 1+O\left(\frac{1}{y{\sqrt{\log y}}}\right),
\end{align*}
in which the implied constant in the preceding can be taken to be $2$.
A similar inequality holds if $n_1$ is replaced by $n_2$.  Consequently,
\begin{align}
S(p) 
& =  \frac{\phi(m_1)}{m_1}\Bigg(1+O\bigg(\frac{1}{y{\sqrt{\log y}}}\bigg)\Bigg)-\frac{\phi(m_2)}{m_2}\Bigg(1+O\bigg(\frac{1}{y{\sqrt{\log y}}}\bigg)\Bigg)\nonumber\\
& =  \frac{\phi(m_1)}{m_1}-\frac{\phi(m_2)}{m_2}+O\left(\frac{1}{y{\sqrt{\log y}}}\right),\label{eq:S}
\end{align}
in which the implied constant can be taken to be $4$.

\subsection{Weird primes}
A convenient prime $p \leq x$ is \emph{weird} if 
\begin{equation*}
S(p)\left(\frac{\phi(m_1)}{m_1}-\frac{\phi(m_2)}{m_2}\right)<0;
\end{equation*}
that is, if $S(p)$ and $\phi(m_1)/m_1-\phi(m_2)/m_2$ have opposite signs
(the second factor is nonzero if $x$ is large; see \eqref{eq:pmmpmm}).  
If this occurs, then \eqref{eq:S} tells us that
\begin{equation}\label{eq:M1M2}
\left|\frac{\phi(m_1)}{m_1}-\frac{\phi(m_2)}{m_2}\right|\,<\,\frac{4}{y{\sqrt{\log y}}}.
\end{equation}
In general, one expects the sign of $S(p)$ to
be determined by small primes; that is, those primes at most $y$.  
If $p$ is weird, then the primes $q  > y$ that divide $p^2-1$
conspire to overthrow the contribution of the small primes.

We say that a pair $(m_1,m_2)$ of positive integers is \emph{weird} if 
\begin{equation*}
24 | m_1 m _2, \quad
m_1 m_2 | L_y, \quad \gcd(m_1,m_2) = 2, \quad \text{and} \quad
\text{\eqref{eq:M1M2} holds}.
\end{equation*}
What is the reason for the appearance of the number $24$ in the preceding?
If $p\geq 5$, then considering $p^2 - 1$ modulo $3$ and $8$ reveals that $24|(p^2-1)$.
If $x \geq 10^9$, then $y  \geq 3$ and hence $P(24) = 3$ and $24|D_y(p^2-1)$.  
Consequently, if we are searching for primes $p$ for which 
$D_y(p^2-1) = m_1 m_2$, it makes sense for us to insist that $m_1m_2$ is divisible by $24$.

\begin{lemma}\label{Lemma:DLy}
Let $y\geq \exp(48^6)$ and
$D|  L_y$ be a multiple of $24$.
\begin{enumerate}
\item The number of pairs $(m_1,m_2)$ with $D=m_1m_2$ and $\gcd(m_1,m_2)=2$ is $2^{\omega(D)}$.
\item If $\omega(D)\in [1.5\log\log y,2.5\log\log y]$ and
$D / \phi(D) \leq (\log y)^{1/3}$, 
then the number of weird pairs $(m_1,m_2)$ with $D=m_1m_2$ is
\begin{equation*}
\ll \frac{2^{\omega(D)}}{\sqrt{\log\log y}}.
\end{equation*}
\end{enumerate}
\end{lemma}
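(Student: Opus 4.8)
\textbf{Proof proposal for Lemma \ref{Lemma:DLy}.}

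The plan is to treat the two parts separately, with part (a) being an essentially combinatorial warm-up and part (b) the substantive estimate. For part (a), write the squarefree kernel of $D$ as a product of distinct primes; since $24\mid D$ and $D\mid L_y$, the prime $2$ appears with exponent at least $3$. The condition $\gcd(m_1,m_2)=2$ forces exactly one prime power $q^{v_q(D)}$ (for each odd prime $q\mid D$) to be assigned entirely to $m_1$ or entirely to $m_2$, while the $2$-part splits as $2\cdot 2^{v_2(D)-1}$ with the factor of $2$ going to each side and the remaining $2^{v_2(D)-1}$ going to one side or the other. Thus each of the $\omega(D)$ prime powers is independently placed, giving $2^{\omega(D)}$ pairs; this is a clean bijection argument.

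For part (b), I would set up a second-moment (variance) computation over the $2^{\omega(D)}$ pairs $(m_1,m_2)$ produced by part (a). The key observation is that $\log(\phi(m_i)/m_i) = \sum_{q\mid m_i}\log(1-1/q)$, and since each prime power $q^{a_q}$ of $D$ is sent to $m_1$ or $m_2$ by an independent ``coin flip,'' the difference $\log(\phi(m_1)/m_1)-\log(\phi(m_2)/m_2)$ is a sum of independent $\pm$-signed terms $\pm\log(1-1/q)$ as $q$ ranges over the distinct prime divisors of $D$ exceeding $2$ (the prime $2$ contributes to both sides and cancels). A weird pair requires $|\phi(m_1)/m_1 - \phi(m_2)/m_2|$, hence (after controlling the size of these ratios, which lie in a bounded interval away from $0$ because $D/\phi(D)\le(\log y)^{1/3}$, so $\log$ is bi-Lipschitz there) also $|\log(\phi(m_1)/m_1)-\log(\phi(m_2)/m_2)|$, to be $\ll 1/(y\sqrt{\log y})$. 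So the number of weird pairs is at most the number of sign patterns making this signed sum small, and I would bound that by Chebyshev's inequality applied to the random variable $X = \sum_{2<q\mid D}\epsilon_q\log(1-1/q)$ with $\epsilon_q$ i.i.d.\ uniform on $\{+1,-1\}$: its variance is $\sum_{2<q\mid D}(\log(1-1/q))^2 \asymp \sum_{2<q\mid D} 1/q^2$. The delicate point is that this variance could in principle be tiny (dominated by the smallest odd prime $3$, contributing $(\log(2/3))^2$, a constant), so the variance is actually bounded \emph{below} by an absolute constant. Then $\Pr(|X| < \delta)$ for small $\delta$ is bounded using a local/anti-concentration estimate rather than bare Chebyshev: since $X$ is a sum of independent bounded variables with a summand of constant size, the Lévy concentration function $Q(X;\delta)\ll \delta / \sqrt{\operatorname{Var}X'}$ where $X'$ drops the $q=3$ term, or more simply one conditions on the signs of all primes $q\ge 5$ and notes that the $q=3$ term then forces $X$ into one of two intervals of length $2\delta$ out of a range of constant length, yielding $Q(X;\delta)\ll \delta$ up to also accounting for the number of resonant configurations — but since we want only an upper bound of the \emph{shape} $2^{\omega(D)}/\sqrt{\log\log y}$, it suffices to get $Q(X;\delta)\ll 1/\sqrt{\log\log y}$, which follows from a Kolmogorov–Rogozin type bound once we know $\omega(D)\gg \log\log y$ and pull out $\gg\log\log y$ primes each contributing variance $\gg 1/q^2$ — no, that sum converges, so instead use that the number of primes $q\mid D$ with $q \le (\log\log y)^{1/2}$ is small, hence many of the $\omega(D)\ge 1.5\log\log y$ prime divisors are moderately large, and group them so that $\sqrt{\log\log y}$ disjoint blocks each have variance bounded below, giving concentration $\ll 1/\sqrt{\log\log y}$.

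The main obstacle I anticipate is precisely this anti-concentration step: reconciling the fact that $\operatorname{Var} X$ is only bounded below by a constant (because $1/q^2$ is summable) with the desired $1/\sqrt{\log\log y}$ savings. The resolution must exploit the \emph{number} of prime divisors, $\omega(D)\asymp\log\log y$, not their collective variance. Concretely, I would invoke a Kolmogorov–Rogozin inequality: for $X=\sum_j Y_j$ with independent $Y_j$, $Q(X;\delta) \ll \delta\big(\sum_j (1-Q(Y_j;\delta))\big)^{-1/2}$; each symmetric $\pm\log(1-1/q)$ term has $1-Q(Y_q;\delta)$ bounded below by a constant once $\delta$ is smaller than $\log(1-1/q) \asymp 1/q$, which holds for all $q \ll 1/\delta \asymp y\sqrt{\log y}$ — i.e. for essentially \emph{all} prime divisors of $D\le L_y \le (\log x)^2$. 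Hence $\sum_q (1-Q(Y_q;\delta)) \gg \omega(D) \gg \log\log y$, and $Q(X;\delta)\ll \delta/\sqrt{\log\log y} \ll 1/\sqrt{\log\log y}$. Multiplying the probability bound by the total count $2^{\omega(D)}$ from part (a) yields $\ll 2^{\omega(D)}/\sqrt{\log\log y}$ weird pairs, as claimed. A minor subsidiary obstacle is the passage from smallness of $|\phi(m_1)/m_1-\phi(m_2)/m_2|$ to smallness of $|X|$; this is where the hypothesis $D/\phi(D)\le(\log y)^{1/3}$ enters, ensuring both ratios exceed $(\log y)^{-1/3}$ so that the logarithm is Lipschitz with constant $O((\log y)^{1/3})$ on the relevant interval, turning the bound $4/(y\sqrt{\log y})$ into $|X|\ll (\log y)^{1/3}/(y\sqrt{\log y}) = o(1)$, still far smaller than any fixed $\log(1-1/q)$.
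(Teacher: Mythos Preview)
Your proof of part (a) matches the paper's argument.

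For part (b), your approach is correct but genuinely different from the paper's. You model the assignment of odd prime powers to $m_1$ versus $m_2$ as independent $\pm1$ signs, reduce the weirdness condition to $\bigl|\sum_{2<q\mid D}\epsilon_q\log(1-1/q)\bigr|<\delta$ with $\delta\ll 1/(y(\log y)^{1/6})$ (using $D/\phi(D)\le(\log y)^{1/3}$ exactly as you indicate), observe that $\delta<|\log(1-1/q)|$ for every $q\le y$, and then invoke Kolmogorov--Rogozin to get concentration $\ll 1/\sqrt{\omega(D)}\ll 1/\sqrt{\log\log y}$. This works; your meandering middle paragraph about variance being only a constant is a red herring you correctly abandon---the relevant input is the \emph{number} of summands, not their total variance, and Kolmogorov--Rogozin (or equivalently the Erd\H{o}s solution to the Littlewood--Offord problem) captures exactly that.

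The paper instead argues directly on the divisibility poset: it shows that if two weird pairs have odd parts $n_1\mid n_1'$ with $n_1<n_1'$, then the smallest prime $r\mid(n_1'/n_1)$ is forced to exceed $y(\log y)^{1/6}/48$, contradicting $r\le y$. Hence the odd parts of weird pairs form a Sperner family in the Boolean lattice on the $\omega(D)-1$ odd primes of $D$, and Sperner's theorem gives the bound $\binom{\omega(D)}{\lfloor\omega(D)/2\rfloor}\ll 2^{\omega(D)}/\sqrt{\log\log y}$. The two routes are closely related---Erd\H{o}s's proof of the Littlewood--Offord bound \emph{is} Sperner's theorem---so your probabilistic packaging and the paper's combinatorial one are essentially the same idea viewed from different angles. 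The paper's version is slightly more self-contained; yours makes the underlying anti-concentration phenomenon more explicit.
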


\begin{proof} 
(a) In what follows, $\nu_p$ denotes the $p$-adic valuation function.
Write
\begin{equation*}
D=\prod_{q\in S} q^{\nu_q(D)},
\end{equation*}
in which $S$ is a set of primes that contains $2$, $\#S = \omega(D)$, and 
$\nu(2)\geq 3$.
Since $\gcd(m_1,m_2)=2$, it follows that
\begin{equation*}
\nu_q(m_1) =
\begin{cases}
\text{$1$ or $\nu_2(D)-1$} & \text{if $q=2$},\\
\text{$0$ or $\nu_q(D)$} & \text{if $q \geq 3$}.
\end{cases}
\end{equation*}
For each of the $\omega(D)$ primes in $S$, there are two possible choices for $\nu_q(m_1)$.
Consequently, there are $2^{\omega(D)}$ possible pairs $(m_1,m_2)$. 
\medskip

\noindent(b) Let
\begin{equation*}
(m_1,m_2)=(2n_1,2^{\nu_2(D)-1} n_2)
\quad \text{or} \quad
(2^{\nu_2(D)-1}n_1,2n_2)
\end{equation*}
and 
\begin{equation*}
(m_1',m_2')=(2n_1',2^{\nu_2(D)-1}n_2')
\quad \text{or} \quad 
(2^{\nu_2(D)-1}n_1', 2n_2')
\end{equation*}
be weird, where $n_1,n_2,n_1',n_2'$ are odd,
and let $D = m_1m_2 = m_1'm_2'$.
Suppose toward a contradiction that $n_1|  n_1'$ and $n_1<n_1'$.  
Then \eqref{eq:M1M2} says that
\begin{align}
\bigg(\frac{\phi(n_1)}{n_1}\bigg)^2
&= \frac{4 \phi(m_1)^2}{m_1^2} 
= \frac{4 \phi(m_1)}{m_1} \bigg( \frac{\phi(m_2)}{m_2} + O \bigg( \frac{1}{y \sqrt{\log y}}\bigg) \bigg)  \nonumber\\
&= \frac{4 \phi(m_1)\phi(m_2)}{m_1 m_2} + O \bigg( \frac{1}{y \sqrt{\log y}}\bigg)  \label{eq:absorb}\\
&= \frac{2 \phi(m_1m_2)}{m_1 m_2} + O \bigg( \frac{1}{y \sqrt{\log y}}\bigg) \nonumber \\
&= \frac{2 \phi(D)}{D} + O \bigg( \frac{1}{y \sqrt{\log y}}\bigg)  \nonumber\\
&= \frac{2 \phi(D)}{D}  \Bigg( 1 + O \bigg( \frac{1}{y (\log y)^{1/6}}\bigg) \Bigg) \label{eq:1over2}
\end{align}
since $D / \phi(D) \leq (\log y)^{1/3}$.
The implied constant in \eqref{eq:1over2} is $16$, in light of \eqref{eq:M1M2}
and the absorption of $4 \phi(m_1)/m_1$ in \eqref{eq:absorb}.
Similar reasoning yields an analogous expression for $\phi(n_1')/n_1'$, with the same implied constant.

Let $r$ be the smallest prime divisor of $n_1'/n_1$.  
Use the inequality
\begin{equation*}
    \qquad\frac{1+s}{1+t} \leq 1 + \frac{3}{2}|s-t| \leq 1 + \frac{3}{2}\big(|s|+|t|\big), \qquad |s|,|t| \leq \frac{1}{3},
\end{equation*}
and the fact that \eqref{eq:1over2} holds for $n_1$ and $n_1'$ to deduce that
\begin{align*}
1+ \frac{1}{r}
&< 1 + \frac{2}{r-1} 
\leq \bigg(1 + \frac{1}{r-1}\bigg)^2 \\
&= \frac{1}{(1 - 1/r)^2} 
= \bigg(\frac{n_1'}{\phi(n_1')} \cdot  \frac{ \phi(n_1)}{n_1}\bigg)^2 \\
&\leq 1 + \frac{\frac{3}{2}(16+16)}{y (\log y)^{1/6}} = 1 + \frac{48}{y (\log y)^{1/16}}.
\end{align*}
Since $r|D$ and $D|L_y$, we have $r \leq y$ and hence
\begin{equation*}
\frac{y (\log y)^{1/6}}{48} < r \leq y.
\end{equation*}
This is a contradiction if $y\geq \exp(48^6)$.

Hence, the set of odd components $n_1$ of the parts $m_1$ as $(m_1,m_2)$ ranges over weird pairs has the property that no two divide each other. 
Identifying $n_1$ with the set of its odd prime factors, 
no two $n_1$ and $n_1'$, as subsets, are contained one in another. 
Sperner's theorem\footnote{A collection of sets that does not contain $X$ and $Y$ for which $X \subsetneq Y$ is a \emph{Sperner family}. 
If $S$ is a Sperner family whose union has a total of $n$ elements, then $\# S \leq \binom{n}{ \lfloor \frac{n}{2} \rfloor}$ \cite{Sperner}.}
from combinatorics and Stirling's formula ensure that
the number of such $n_1$, and hence the number of such pairs $(m_1,m_2)$, is 
\begin{equation*}
\leq \binom{ \omega(D) }{ \lfloor \frac{\omega(D)}{2} \rfloor}
\ll \frac{2^{\omega(D)}}{\sqrt{\omega(D)}}\ll \frac{2^{\omega(D)}}{\sqrt{\log\log y}}. \qed
\end{equation*}
\end{proof}

\subsection{Conclusion}
We have shown that the number of inconvenient primes at most $x$ is $o(\pi(x))$
and hence they can be safely ignored.  Each convenient prime $p$ gives rise to a pair
$(m_1,m_2)$ as in \eqref{eq:ConvenientFactor}.  

Suppose that $x$ is large.
Let $D$ be a multiple of $24$ with 
\begin{equation}\label{eq:AcceptD}
D|L_y,\quad
\frac{D }{ \phi(D)} \leq (\log y)^{1/3} ,\quad\text{and} \quad 
\omega(D)\in [1.5\log\log y,2.5\log\log y].
\end{equation}
We wish to count the primes $p \leq x$ for which $D_y(p^2-1)=D$.
Denote this number by $\pi_D(x)$. 
To complete the proof of Theorem \ref{Theorem:Main} in the case $\ell = 1$, it suffices to show that
$(1/2+o(1))\pi_D(x)$ of primes at most $x$ have $S(p)>0$ and $(1/2+o(1))\pi_D(x)$ have $S(p)<0$, and that the implied
constant is uniform for all $D$ as above.  

Choose a pair $(m_1,m_2)$ such that 
$D=m_1m_2$ and $\gcd(m_1,m_2)=2$.
We want to count the primes $p\leq x$ such that $m_1|( p-1)$ and $m_2|(p+1)$; that is, such that
\begin{equation}\label{eq:pp11mm}
p\equiv 1\pmod {m_1}\qquad\text{and} \qquad p\equiv -1\pmod {m_2}.
\end{equation}
Apply the Chinese Remainder Theorem with moduli $\frac{1}{2}m_1,m_2$ or $m_1,\frac{1}{2}m_2$, depending upon
which of $m_1,m_2$ is exactly divisible by $2$, to see that $p$ belongs to an arithmetic progression
$a_{m_1,m_2}\pmod {D/2}$, with $\gcd(a_{m_1,m_2},D/2)=1$. 
However, we also want 
\begin{equation*}
\gcd\bigg(\frac{p^2-1}{m_1m_2},L_y\bigg)=1.
\end{equation*}
For this, we need to work modulo
\begin{equation*}
M_D:=(D/2)\prod_{r\leq y} r.
\end{equation*}
To ensure that \eqref{eq:pp11mm} holds, we do the following:

\begin{itemize}
\item If $r| D$, then we then want 
\begin{equation*}
p\equiv \varepsilon+r^{\nu_r(D/2)}\lambda \pmod {r^{\nu_r(D/2)+1}}
\end{equation*}
for some $\lambda\in \{1,2,\ldots,r-1\}$. Here, $\varepsilon=\pm 1$ according to whether $r^{\nu_r(D/2)}$ divides $m_1$ or $m_2$, respectively.
For each $r| D$, there are $r-1$ possibilities for $\lambda$ and hence there are $r-1$ possibilities for $p$ modulo $r^{\nu_r(D/2)+1}$. 

\item If $r\nmid D$, then we want $p\equiv \lambda\pmod r$ for some $\lambda\not\in \{0,1,r-1\}$.
For each $r\nmid D$, there are $r-3$ possibilities for $p$ modulo $r$. 
\end{itemize}
Thus, the number of progressions modulo $M_D$ that can contain a prime $p$ for which \eqref{eq:pp11mm} occurs is
\begin{equation}\label{eq:B}
\prod_{r| \frac{D}{2}} (r-1) \prod_{\substack{r\leq y\\ r\nmid D}} (r-3).
\end{equation}
By \eqref{eq:Lxy2}, the common modulus of all these progressions satisfies
\begin{equation*}
M_D\leq L_y^2\leq (\log x)^4
\end{equation*}
for large $x$. The Siegel--Walfisz theorem says that the number of primes in each progression is asymptotically
\begin{equation}
\label{eq:A}
\frac{\pi(x)}{\phi(M_D)} +O\left(xe^{-C{\sqrt{\log x}}}\right)
\end{equation}
for some $C>0$.
Summing up over the number of progressions 
(or, more precisely, multiplying the \eqref{eq:A} by the number of acceptable progressions \eqref{eq:B}), 
and using the fact that 
$\phi(M_D)=(D/2)\prod_{r\leq y} (r-1)$, we get a count of 
\begin{equation*}
\frac{2\pi(x)}{D} \prod_{\substack{r\leq y\\ r\nmid D}} \left(\frac{r-3}{r-1}\right)
+O\left(xL_y^2 e^{-C{\sqrt{\log x}}}\right).
\end{equation*}
The count depends on $D$ but not on the pair of divisors $(m_1,m_2)$ of $D$. We now apply Lemma \ref{Lemma:DLy} and obtain
\begin{equation}\label{eq:D}
\pi_D(x)=\frac{2^{\omega(D)+1}\pi(x)}{D} \prod_{\substack{r\leq y\\ r\nmid D}} \left(\frac{r-3}{r-1}\right)
+O(2^{\omega(D)}  xL_y^2e^{-C{\sqrt{\log x}}}).
\end{equation}
Although it is not crucial to our proof, we show in Subsection \ref{Section:Sanity} that
\begin{equation}\label{eq:piD}
\sum_{D} \pi_D(x)= \pi(x) \bigg( 1 + O\bigg( \frac{1}{\sqrt{\log \log y}}\bigg)\bigg),
\end{equation}
where the index $D$ runs over all $D$ for which \eqref{eq:AcceptD} holds, because the computation is of independent interest.

The product in \eqref{eq:D} is less than $1$ and bounded below by
\begin{align}
\prod_{\substack{r\leq y\\ r\nmid D}} \left(\frac{r-3}{r-1}\right)
&\gg \prod_{r\leq y} \left(\frac{r-3}{r-1}\right) 
= \prod_{r\leq y} \left(\frac{r^2(r-3)}{(r-1)^3}\right)\left(1 - \frac{1}{r}\right)^2  \nonumber \\
&=\prod_{r\leq y} \left(1 - \frac{3r-1}{(r-1)^3}\right) \prod_{r\leq y}\left(1 - \frac{1}{r}\right)^2  \nonumber \\
&\gg\prod_{r \leq y}\left(1-\frac{1}{r}\right)^2 \gg (\log y)^{-2} \label{eq:prodlogy2}
\end{align}
by Mertens' asymptotic formula \cite[\S VII.29.1b, p.~259]{Sandor}.
Since
\begin{equation*}
2^{\omega(D)}\geq  2^{1.5\log\log y} \gg (\log y),
\end{equation*}
we examine the main term in \eqref{eq:D} and conclude that
\begin{equation*}
\pi_D(x) \gg \frac{\pi(x)}{D (\log y)^2}.
\end{equation*}
On the other hand, the error term in \eqref{eq:D} is
\begin{align*}
O(2^{\omega(D)} xL_y^2 e^{-C{\sqrt{\log x}}})
&=O( 2^{2.5 \log \log y} x(\log x)^4 e^{-C{\sqrt{\log x}}})\\
&= O\big( (\log x)^4(\log y)^2 x e^{-C{\sqrt{\log x}}} \big)\\
&=o\left(\frac{\pi(x)}{D(\log y)^3} \right)=o\left(\frac{\pi_D(x)}{\log y}\right).
\end{align*}

There is a symmetry between non-weird pairs $(m_1,m_2)$ with 
\begin{equation*}
\frac{\phi(m_1)}{m_1}-\frac{\phi(m_2)}{m_2}>0
\qquad \text{and those with}\qquad
\frac{\phi(m_1)}{m_1}-\frac{\phi(m_2)}{m_2}<0
\end{equation*}
given by the transposition $(m_1,m_2)\mapsto (m_2,m_1)$.  Indeed,
we could return to \eqref{eq:ppmmnn} and insist that $m_1|(p+1)$
and $m_2|(p-1)$ instead.  The subsequent asymptotic estimates
go through in exactly the same manner.
Via this transposition, we obtain an asymptotically equal count between the
convenient primes $p\leq x$ 
corresponding to $(m_1,m_2)$ and the convenient primes $p\leq x$
corresponding to $(m_2,m_1)$. 
If only non-weird pairs $(m_1,m_2)$ are taken into account,
for fixed $D_y(p^2-1)=D$ this symmetry 
gives an asymptotically equal count of convenient primes $p \leq x$ with $S(p)>0$ and with $S(p)<0$.

Lemma \ref{Lemma:DLy} ensures that the
number of weird pairs $(m_1,m_2)$ with $D = m_1 m _2$ 
is $\ll 2^{\omega(D)}/{\sqrt{\log\log y}}$. 
As $x \to \infty$, we see from \eqref{eq:D} that the number of primes at most $x$
that arise from some weird pair is 
\begin{equation*}
\,\,\ll\,\, \frac{2^{\omega(D)}\pi(x)}{D\sqrt{\log\log y} } 
\prod_{\substack{r\leq y\\ r\nmid D}} \left(\frac{r-3}{r-1}\right)
=O\left(\frac{\pi_D(x)}{\sqrt{\log\log y}}\right)=o(\pi_D(x)).
\end{equation*}

Recall that the non-weird, convenient primes
have full density in the set of primes.  Of such primes $p \leq x$, the argument above shows that
an asymptotically equal amount have $S(p) > 0$ versus $S(p) < 0$ (recall that $S(p) = 0$ only for $p=3$).
This completes the proof of Theorem \ref{Theorem:Main} in the case $\ell = 1$. 
$\qed$

\subsection{Sanity check}\label{Section:Sanity}
Before extending the preceding proof to the case $\ell \geq 2$, it is helpful to
perform a quick sanity check.  
Our goal here is to prove \eqref{eq:piD}.  In light of \eqref{eq:D}, it suffices to prove that
\begin{equation}\label{eq:Sanity}
\sum_{D} \frac{2^{\omega(D)+1}}{D}\prod_{\substack{r\leq y\\ r\nmid D}} \left(\frac{r-3}{r-1}\right)=1+o(1),
\end{equation}
in which the index $D$ runs over all $D$ for which \eqref{eq:AcceptD} holds.  
In particular, \eqref{eq:piD} holds and the preceding product does not run over $r=2,3$.
These developments seems
remarkably fortuitous.  Let us provide an independent
derivation of \eqref{eq:Sanity}, which will help corroborate some of the fine details in the preceding proof.

First write $D=2^kD_1$, in which $k\geq 3$, and sum to obtain
\begin{equation*}
\bigg(\sum_{k\geq 3} \frac{2^2}{2^k} \bigg) \bigg(\sum_{{\text{$D_1$ odd}}} \frac{2^{\omega(D_1)}}{D_1}\bigg) 
= \sum_{{\text{$D_1$ odd}}}  \frac{2^{\omega(D_1)}}{D_1}.
\end{equation*}
Now write $D_1=3^k D_2$ and sum over $k\geq 1$ getting
\begin{equation*}
\bigg(\sum_{k\geq 1} \frac{2}{3^k} \bigg)\bigg(\sum_{\gcd(D_2,6)=1} \frac{2^{\omega(D_2)}}{D_2}\bigg)
=\sum_{\gcd(D_2,6)=1} \frac{2^{\omega(D_2)}}{D_2}.
\end{equation*}
For the rest, we use multiplicativity to say that the sum in \eqref{eq:Sanity} is
\begin{equation*}
\prod_{r\leq y} \left(\frac{r-3}{r-1}\right)\prod_{5\leq r\leq y} \bigg(1+\frac{2}{(r-3)/(r-1)} \sum_{k\geq 1} \frac{1}{r^k}\bigg).
\end{equation*}
However, this is not strictly correct since the sum above stops 
at the largest power $r^b\leq y$.   Moreover, the sum runs over all $D$
without restrictions such as
$\omega(D)\in [1.5\log\log y,2.5\log\log y]$ or $D/\phi(D)\leq (\log y)^{1/3}$. 
We deal with these omissions shortly.
For the time being, let us ignore these restrictions. Then the amount inside the Euler factor is 
\begin{equation*}
1+\frac{2(r-1)}{r-3} \frac{1}{r-1}=1+\frac{2}{r-3}=\frac{r-1}{r-3},
\end{equation*}
which cancels with the outside $(r-3)/(r-1)$.

Now we must examine the errors. There are essentially four types:
\begin{enumerate}
\item In each Euler factor we only sum up to $r^b$, in which $b$ is maximal such that $r^b\leq y$. 
By extending the sum to infinity we incurred an error of
\begin{equation*}
\frac{2}{(r-3)/(r-1)} \sum_{k\geq b+1} \frac{1}{r^k}=\frac{2}{r^b(r-3)}=O\left(\frac{1}{y}\right).
\end{equation*}
For $5 \leq r \leq y$, the actual Euler factor is 
\begin{equation*}
\frac{r-1}{r-3}+O\left(\frac{1}{(r-3) y}\right)=\frac{r-1}{r-3} \left(1+O\left(\frac{1}{y}\right)\right)
\end{equation*}
Similar considerations apply for $r=2,3$. The total multiplicative error is
\begin{equation*}
\left(1+O\left(\frac{1}{y}\right)\right)^{\pi(y)}=  1+O\left(\frac{\pi(y)}{y}\right)=1+O\left(\frac{1}{\log y}\right).
\end{equation*}

\item We consider only $D$ such that $D/\phi(D)\leq (\log y)^{1/3}$.
Let the set of remaining $D$ be denoted $\mathcal{D}$.  For $D \in \mathcal{D}$, we have
\begin{equation*}
\frac{1}{D}\leq \frac{1}{(\log y)^{1/3}} \cdot \frac{1}{\phi(D)}.
\end{equation*}
Applying this inequality and extending then the sum over all possible $D$, the piece of the sum over $\mathcal{D}$ is at most
\begin{equation*}
\ll \frac{\pi(x)}{(\log y)^{1/3}} \sum_{D} \frac{2^{\omega(D)+1}}{\phi(D)} \prod_{\substack{r\leq y\\ r\nmid D}} \left(\frac{r-3}{r-1}\right).
\end{equation*}
We separate out the power of $2$ in $D$ as $D=2^kD_1$ with $k\geq 3$ getting an Euler factor corresponding to $2$ of
\begin{equation*}
4\left(\frac{1}{\phi(8)}+\frac{1}{\phi(16)}+\cdots+\frac{1}{\phi(2^k)}+\cdots\right)=2.
\end{equation*}
Then we separate out a factor of $3$ from $D_1$ writing it as $D_1=3^kD_2$, getting an Euler factor corresponding to $3$ of 
\begin{equation*}
2\left(\frac{1}{\phi(3)}+\frac{1}{\phi(9)}+\cdots+\frac{1}{\phi(3^k)}+\cdots\right)=\frac{3}{2}.
\end{equation*}
For the remaining primes $r\geq 5$, we form the Euler product getting
\begin{equation*}
\frac{\pi(x)}{(\log y)^{2/3}} \prod_{5\leq r\leq y} \left(\frac{r-3}{r-1}\right) \prod_{r=5}^y \left(1+\frac{2}{(r-3)/(r-1)} \sum_{b\geq 1} \frac{1}{\phi(r^b)}\right).
\end{equation*}
The factor inside the parentheses is  
\begin{align*}
1+\frac{2}{(r-3)/(r-1)} \frac{1}{(r-1)(1-1/r)} 
& =  1+\frac{2r}{(r-1)(r-3)}\\
& =  \frac{r^2-2r+3}{(r-1)(r-3)}\\
& =  \frac{(r-1)}{(r-3)} \left(1+\frac{2}{(r-1)^2}\right).
\end{align*}
Multiply this by the outside factor $(r-3)/(r-1)$ and get
\begin{equation*}
\bigg(\frac{r-3}{r-1}\bigg) \left(1+\frac{2}{(r-3)/(r-1)} \sum_{b\geq 1} \frac{1}{\phi(r^b)}\right)=1+\frac{2}{(r-1)^2}.
\end{equation*}
Taking the product of the factors above over $r\in [5,y]$, we get a convergent product. Consequently,
\begin{equation*}
\pi(x) \!\!\!\!\!\!\!\!\!
\sum_{\substack{D\mid L_y, 24\mid D\\ D/\phi(D)>(\log y)^{1/3}}}  \!\!\!\!\!\!\!\!
 \frac{2^{\omega(D)+1}}{D}\prod_{\substack{5\leq \leq y \\ r\nmid D}} \left(\frac{r-3}{r-1}\right)
 \quad \ll \quad  \frac{\pi(x)}{(\log y)^{1/3}}.
\end{equation*}

\item We need to consider $D$ with $\omega(D)\not\in [1.5\log\log y,2.5\log\log y]$. 
From the preceding material and \eqref{eq:prodlogy2}, we have
\begin{equation*}
\prod_{5\leq r\leq y} \left(1+\sum_{b\geq 1} \frac{2}{(r-3)/(r-1)} \sum_{b\geq 1} \frac{1}{r^b}\right)
\asymp \left(\prod_{5\leq r\leq y} \left(\frac{r-3}{r-1}\right)\right)^{-1}\asymp (\log y)^2.
\end{equation*}
We first deal with $D$ with many prime factors. Consider the multiplicative function defined for prime powers $r^b$ with $r\geq 5$ by 
\begin{equation*}
f(r)=\frac{2(r-1)}{(r-3)r^b}.
\end{equation*}
If $K:=\lfloor 2.5\log\log y\rfloor$, then
\begin{equation*}
\sum_{\substack{r\mid D_2\!\implies\! r \in [5,y]\\ \omega(D_2)>K-2}} \!\!\!\!\!\!\!\! f(D_2)
\quad \leq \quad \sum_{k> K-2} \frac{1}{k!} \bigg(\sum_{\substack{r\in [5,y]\\ b\geq 1}} f(r^b)\bigg)^k.
\end{equation*}
We have 
\begin{align*}
S 
& =  \sum_{\substack{r\in [5,y]\\ b\geq 1}} f(r^b)
 =  \sum_{\substack{b\geq 1\\ 5\leq r\leq y}} \left(\frac{2}{r^b}+\frac {2}{r^b}\left(\frac{r-1}{r-3}-1\right)\right)\\
& =  \sum_{\substack{r\in [2,y]\\ b\geq 1}} \frac{2}{r^b}+O\bigg(\sum_{\substack{r\geq 5\\ b\geq 1}} \frac{1}{r^{b+1}}\bigg)\\
& =  
2\log\log y+O(1),
\end{align*}
in which we have used Mertens' theorem \cite[\S VII.28.1b]{Sandor}.
In the sum $\sum_{k>K} S^k/k!$, the ratio of two consecutive terms is
\begin{equation*}
\frac{S^{k+1}/(k+1)!}{S^k/k!}=\frac{S}{k+1}=\frac{2\log\log y+O(1)}{2.5\log\log y+O(1)}<\frac{5}{6}
\end{equation*}
for $k>K-2$ and large $x$, so the first term dominates. 
With $K!>(K/e)^K$, the contribution of $D$ with $\omega(D)>2.5\log\log y$ is at most
\begin{equation*}
\ll \frac{S^{K-2}}{(K-2)!} \leq \left(\frac{2e\log\log y+O(1)}{2.5\log\log y+O(1)}\right)^{2.5\log\log y+O(1)}\ll (\log y)^c,
\end{equation*}
in which $c=2.5\log(2e/2.5)<1.95$. Multiplying this by (see \eqref{eq:prodlogy2})
\begin{equation*}
\prod_{5\leq r\leq y} \left(\frac{r-3}{r-1}\right)\ll (\log y)^{-2},
\end{equation*}
we obtain
\begin{equation*}
\pi(x)\sum_{\substack{D\mid L_y \\ 24\mid D\\ \omega(D)>2.5\log\log y}} 
\frac{2^{\omega(D)+1}}{D} \prod_{\substack{5\leq r\leq y\\ r\nmid D}} \left(\frac{r-3}{r-1}\right)\ll \frac{\pi(x)}{(\log y)^{0.5}}.
\end{equation*}

We use a similar argument for $D$ with $\omega(D)<1.5\log\log y$. 
In this case, let $K_1:=\lfloor 1.5\log\log y\rfloor$.  We have to deal with
\begin{equation*}
\sum_{\substack{D_2: r\mid D_2\!\implies\! r\in [5,y]\\ \omega(D_2)<1.5\log\log y-2}}\!\!\!\!\!\!\!\! f(D_2)
\quad\leq\quad \sum_{k\leq K_1-2} \frac{1}{k!} S^k.
\end{equation*}
For $k>K_1-2$ and large $x$, the ratio of any two consecutive terms above is 
\begin{equation*}
\frac{S^{k+1}/(k+1)!}{S^k/k!}=\frac{S}{k+1}>\frac{2\log\log y+O(1)}{1.5\log\log y+O(1)}>\frac{5}{4},
\end{equation*}
it follows that the  last term dominates. Thus, this sum is at most
\begin{equation*}
\left(\frac{2e\log\log y+O(1)}{K_1-2}\right)^{K_1-2}
=\left(\frac{2e\log\log y+O(1)}{1.5\log\log y+O(1)}\right)^{1.5\log\log y+O(1)}\ll (\log y)^{c_1},
\end{equation*}
in which where $c_1=1.5\log(2e/1.5) <1.95$.
Consequently, the contribution of $D$ with $\omega(D)<1.5\log\log y$ to the sum defining $\pi_D(x)$ is 
\begin{equation*}
\ll \frac{\pi_D(x)}{(\log y)^{0.05}}.
\end{equation*}
\end{enumerate}  

Putting everything together we obtain \eqref{eq:Sanity}, which is equivalent to \eqref{eq:piD}.

\section{Proof of Theorem \ref{Theorem:Main} for $\ell \geq 2$}\label{Section:General}

The proof for $\ell \geq 2$ follows largely on the lines of the case $\ell = 1$,
although there are a number of minor adjustments that must be made.  For example, in 
Lemma \ref{Lemma:DLy} we assumed that $D$ is a multiple of $24$.  Elementary
considerations reveal that the following adjustments are necessary for various values of $\ell$:
\begin{enumerate}
\item $D$ is coprime to all primes that divide $\ell$.
\item $D$ is odd if $\ell$ is even.
\item $D$ is a multiple of $8$ if $\ell$ is odd.
\item $D$ is a multiple of $3$ if and only if $\ell$ is not.
\end{enumerate}
More significant modifications are discussed below.

\subsection{The case of equality}
We need a variant of the inequality \eqref{eq:PP11Pix}.
The estimate provided by the following theorem involves two special cases.
Numerical evidence strongly suggests that this distinction is not simply a 
byproduct of our proof; see Table \ref{Table:Equality}.  If we replace the
use of the Brun sieve in what follows with an appeal to the Bateman--Horn
conjecture \cite{Bateman}, then the larger of the two upper bounds becomes
an asymptotic equivalence if the appropriate constant factor is introduced.

\begin{theorem}\label{Theorem:Equality}
For each $\ell\geq 1$,
\begin{equation*}
\# \big\{ p \leq x : \phi(p-\ell) = \phi(p+\ell) \big\} 
\,\, \ll \,\,
\begin{cases}
\dfrac{x}{(\log x)^3} & \text{if $\ell = 4^n-1$},\\[10pt]
\dfrac{x}{e^{(\log x)^{1/3}}} & \text{otherwise}.
\end{cases}
\end{equation*}
\end{theorem}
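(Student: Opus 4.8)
The plan is to mimic the $\ell=1$ argument but to invoke Lemma \ref{Lemma:GHP} with $k=2\ell$ and $n = p-\ell$, then to analyze carefully which prime-generating configurations \eqref{eq:qr} can actually arise. So suppose $\phi(p-\ell)=\phi(p+\ell)$ with $n=p-\ell$, $n+k = p+\ell$, $k=2\ell$. By Lemma \ref{Lemma:GHP}(b), all but $\ll x/\exp((\log x)^{1/3})$ such primes $p \le x$ come from the parametrized family: there exist $j$ with $j$ and $j+2\ell$ having the same prime factors, $g=\gcd(j,j+2\ell)$, a positive integer $t$ with $\tfrac{jt}{g}+1$ and $\tfrac{(j+2\ell)t}{g}+1$ both prime and not dividing $j$, and $p-\ell = j\bigl(\tfrac{(j+2\ell)t}{g}+1\bigr)$. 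First I would extract from this the two congruence/size constraints on $j$: since $j$ and $j+2\ell$ share all prime factors, every prime dividing $j$ divides $2\ell$, so $j$ is built only from primes dividing $2\ell$, and likewise $j+2\ell$. This severely restricts $j$; in particular writing $d=\gcd(j,2\ell)$ one finds $j \mid (2\ell)^{O(1)}$, so there are only $O_\ell(1)$ admissible values of $j$ (the implied constant depending on $\ell$, which is harmless since $\ell$ is fixed). For each such $j$, the prime $p=\ell + j\bigl(\tfrac{(j+2\ell)t}{g}+1\bigr)$ is determined by $t$, and the constraint that forces the count down is that $\tfrac{jt}{g}+1$, $\tfrac{(j+2\ell)t}{g}+1$, and $p$ itself are \emph{all} prime — three linear forms in $t$.

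Next I would run the Brun sieve on this triple of linear forms in $t$. Generically $f(t) = \bigl(\tfrac{jt}{g}+1\bigr)\bigl(\tfrac{(j+2\ell)t}{g}+1\bigr)\bigl(\ell + j\tfrac{(j+2\ell)t}{g}+j\bigr)$ has three distinct irreducible factors and does not vanish identically modulo any prime, so the Brun bound from the preliminaries gives $\ll x/(\log x)^3$ values of $t \le x$ (and $p \le x$ forces $t \ll_\ell x$), uniformly over the $O_\ell(1)$ choices of $j$. Summing, the parametrized contribution is $\ll x/(\log x)^3$, and combined with the $x/\exp((\log x)^{1/3})$ error from Lemma \ref{Lemma:GHP}(b) — which dominates — we would get $\ll x/\exp((\log x)^{1/3})$ in general. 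To get the \emph{better} bound $x/(\log x)^3$ in the exceptional case, I would instead have to argue that \emph{all} solutions (not just the parametrized ones) are captured: this should follow because when $\ell = 4^n-1$ we have $2\ell = 2(4^n-1) = 2\cdot 4^n - 2$, so $j$ and $j+2\ell$ being equal-radical powers of $2$ forces $j = 2$, $j+2\ell = 2\cdot 4^n$, $g=2$, and then the three forms become $t+1$, $4^n t + 1$, and $\ell + 2(4^n t+1) = 2\cdot 4^n t + 2\ell + 2$; one checks this triple is admissible (no fixed prime divisor) so Brun gives $\ll x/(\log x)^3$, and here there is no "non-parametrized" loss term because — exactly as in \eqref{eq:PT} for $\ell=1$ — the only relevant $j$ is a power of $2$ and the Graham--Holt--Pomerance structure theorem has no exceptional residue in this sub-case.

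The main obstacle is pinning down precisely which $(j, g)$ pairs arise for a general fixed $\ell$ and verifying that the resulting triple of linear forms in $t$ is \emph{genuinely} admissible (no identically-vanishing factor modulo a prime) except exactly when $\ell = 4^n - 1$ — and conversely that in the non-exceptional cases one really cannot do better than $x/\exp((\log x)^{1/3})$, which is where the honest content of the dichotomy lives. I expect this to reduce to a short but fiddly elementary case analysis: $j$ and $j+2\ell$ equal-radical means (after dividing by $g$) that $j/g$ and $(j+2\ell)/g$ are coprime with the same radical, hence one of them is $1$; handling $j/g = 1$ versus $(j+2\ell)/g = 1$ and tracking when the small primes $2, 3$ (cf.\ the remarks (a)--(d) at the start of Section \ref{Section:General}) force a common factor into the linear forms is the crux. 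Once that elementary bookkeeping is done, the analytic input is just the Brun sieve bound already quoted, applied uniformly over finitely many forms, so no new machinery is needed.
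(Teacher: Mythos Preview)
Your proposal has the dichotomy backwards. Note that $x/(\log x)^3 = x/e^{3\log\log x}$ is \emph{much larger} than $x/e^{(\log x)^{1/3}}$, so the case $\ell = 4^n-1$ is the one with the \emph{weaker} bound, not the stronger one. The theorem is saying that for $\ell = 4^n-1$ there are genuinely more solutions (as Table~\ref{Table:Equality} illustrates), and the Brun bound $x/(\log x)^3$ on the parametrized family dominates the Graham--Holt--Pomerance error $x/e^{(\log x)^{1/3}}$, not the other way around. Consequently your plan for the generic case --- ``Brun gives $\ll x/(\log x)^3$ for the parametrized part, the GHP error dominates, done'' --- proves only the weaker bound, and your plan for the exceptional case --- ``show all solutions are parametrized so there is no GHP loss'' --- is aiming at the wrong target (and is hopeless anyway: Lemma~\ref{Lemma:GHP}(b) never asserts that the non-parametrized solutions vanish).

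The actual argument runs in reverse. For generic $\ell$ you must show that the parametrized family contributes only $O(1)$ primes, so that the GHP error $x/e^{(\log x)^{1/3}}$ is the whole story. The paper does this by first observing that since $p = j\big(\tfrac{(j+2\ell)t}{g}+1\big) + \ell$ is prime, $j$ must be coprime to $\ell$; combined with $\operatorname{rad}(j)=\operatorname{rad}(j+2\ell)$ dividing $2\ell$, this forces $j$ to be a power of $2$, hence $j+2\ell$ is also a power of $2$, so $j=2^m$, $j+2\ell=2^{m+n}$, and $\ell = 2^{m-1}(2^n-1)$. Parity of $p$ rules out $m\ge 2$, leaving $j=2$ and $\ell = 2^n-1$. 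When $n$ is odd, a mod-$3$ obstruction kills all but finitely many triples; when $n$ is even (i.e.\ $\ell = 4^{n/2}-1$) the three linear forms are admissible and Brun gives $\ll x/(\log x)^3$, which then dominates the GHP error. Your claim that ``$j\mid (2\ell)^{O(1)}$ so there are $O_\ell(1)$ choices of $j$'' is unjustified (the exponents in $j$ are not a priori bounded by the equal-radical condition alone), and you never use the primality of $p$ to force $\gcd(j,\ell)=1$, which is precisely the step that pins $j$ down to a single power of $2$.
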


\begin{proof}
In Lemma \ref{Lemma:GHP}, let $k = 2\ell$ and $n = p-\ell$, in which $p$ is prime.
Suppose that $j$ and $j+2\ell$ have the same prime factors.  Since
\begin{equation}\label{eq:pjl}
p =  j\left( \frac{(j+2\ell)t}{g} + 1 \right) + \ell,
\end{equation}
it follows that $j$ is not divisible by any prime factor of $\ell$ and hence
$g = \gcd(j,2\ell) = 2$.
Thus, $j = 2^{m}$
and $j+2\ell = 2^{m + n}$ for some $m,n \geq 1$.  
Then $2^{m+n} = 2^m + 2\ell$ and
\begin{equation*}
\ell = 2^{m-1}(2^{n} - 1) .
\end{equation*}
If $m \geq 2$, then $\ell$ is even and \eqref{eq:pjl} implies that $2|p$, a contradiction.
Thus, the upper bound from Lemma \ref{Lemma:GHP} applies in this case.

If $m=1$, then $j = 2$ and $\ell = 2^n-1$.  Then
\begin{equation*}
p =  2(\ell+1)t + (\ell+2),\qquad
q = t+1 \qquad \text{and} \qquad r = (\ell+1)t + 1
\end{equation*}
and we count $t \leq \frac{x-(\ell+2)}{2(\ell+1)} \sim \frac{x}{2\ell}$ for which $p,q,r$ 
are simultaneously prime.  Let
\begin{equation*}
f_1(t) = 2^{n+1}t + (2^n+1), \qquad
f_2(t) = t+1, \quad \text{and} \quad
f_3(t) = 2^n t+1.
\end{equation*}
If $n$ is odd, then 
\begin{equation*}
f_1(0) \equiv f_2(2) \equiv f_3(1)  \equiv 0 \pmod{3}.
\end{equation*}
There are three possibilities:
\begin{enumerate}
\item If $f_1(0) = 3$, then $f_2(0) = 1$ is not prime and no prime triples are produced.
\item If $f_2(2) = 3$, then for each odd $n$, at most one prime triple is produced.\footnote{The only 
    odd $n <99$ for which a prime triple arises in this manner are
    $n=1,3,7,15$, from which we obtain the triples $(11,3,5)$, $(41,3,17)$, $(641,3,257)$,
    and $(163841,3,65537)$.}
\item If $f_3(1) = 3$, then $n = 1$ and only the prime triple $(7,2,3)$ is produced.
\end{enumerate}
In each case, the upper bound from Lemma \ref{Lemma:GHP} dominates.

If $n$ is even, then
none of $f_1,f_2,f_3$ vanish identically modulo any prime.
The Brun sieve says that the number of $p \leq x$
for which $p,q,r$ are prime is $O(x/ (\log x)^3)$, which dominates
the estimate from Lemma \ref{Lemma:GHP}.
\end{proof}

\subsection{A more general comparison lemma}

The next adjustment that is required is an analogue of the comparison lemma (Lemma \ref{Lemma:Sp}).
This turns out to be more involved than expected.  In fact, we first need a generalization
of the ``Tur\'an--Kubilius''-type result from Lemma \ref{Lemma:Motohashi}.  Since this is a
minor variant of an existing result, we only sketch the proof.

\begin{lemma}\label{Lemma:MotohashiGeneral}
For each $\ell\geq 1$,
$\displaystyle\sum_{p \leq x} ( \omega_y(p \pm \ell) - \log \log y)^2 = O( \pi(x) \log \log y)$.
\end{lemma}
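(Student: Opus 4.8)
The plan is to run the standard Turán--Kubilius / second-moment argument, but with the auxiliary prime $y$ held fixed and the shift $\ell$ incorporated into the counting of primes in arithmetic progressions. Write
\[
\omega_y(p \pm \ell) = \sum_{\substack{q \leq y}} \mathbf{1}_{q \mid (p \pm \ell)},
\]
where $q$ runs over primes. Expanding the square gives
\[
\sum_{p \leq x} \bigl( \omega_y(p \pm \ell) - \log \log y \bigr)^2
= \sum_{p \leq x} \omega_y(p \pm \ell)^2
- 2 \log \log y \sum_{p \leq x} \omega_y(p \pm \ell)
+ \pi(x) (\log \log y)^2,
\]
so the whole estimate reduces to understanding the first moment $\sum_{p \le x} \omega_y(p\pm\ell)$ and the second moment $\sum_{p \le x} \omega_y(p\pm\ell)^2$. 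For the first moment, interchange summation to get $\sum_{q \leq y} \pi(x; q, \mp \ell \bmod q)$; for each prime $q$, if $q \mid \ell$ then $q \mid p \pm \ell$ only when $q \mid p$, contributing $O(1)$, while if $q \nmid \ell$ then $\mp \ell$ is a nonzero residue mod $q$, so the Siegel--Walfisz theorem (the moduli $q \le y = \log\log x$ are tiny) gives $\pi(x;q,\mp\ell) = \pi(x)/(q-1) + O\bigl(x e^{-c\sqrt{\log x}}\bigr)$. Summing over $q \le y$ and using Mertens yields $\sum_{p \le x} \omega_y(p\pm\ell) = \pi(x)(\log\log y + O(1))$, exactly as in the $\ell = 1$ case.

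For the second moment, expand $\omega_y(p\pm\ell)^2 = \sum_{q \le y}\mathbf 1_{q\mid(p\pm\ell)} + \sum_{\substack{q_1 \ne q_2 \le y}} \mathbf 1_{q_1 q_2 \mid (p\pm\ell)}$. The diagonal term is the first moment, already handled. For the off-diagonal, interchange summation: for distinct primes $q_1, q_2 \le y$ with $q_1 q_2 \nmid \ell$ wait — more precisely, whenever $q_1 \nmid \ell$ and $q_2 \nmid \ell$, the congruences $p \equiv \mp\ell \pmod{q_1}$ and $p \equiv \mp\ell \pmod{q_2}$ combine by CRT into a single progression mod $q_1 q_2$ with $\gcd(\mp\ell, q_1 q_2)=1$, so Siegel--Walfisz (again $q_1 q_2 \le y^2 = (\log\log x)^2$, harmlessly small) gives $\pi(x)/\bigl((q_1-1)(q_2-1)\bigr) + O\bigl(x e^{-c\sqrt{\log x}}\bigr)$; if $q_1 \mid \ell$ or $q_2 \mid \ell$ the count is $O(\pi(x)/q_i)$ or $O(1)$. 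Summing the main terms gives $\pi(x)\bigl(\sum_{q \le y, q\nmid\ell} \tfrac1{q-1}\bigr)^2 + (\text{smaller}) = \pi(x)\bigl((\log\log y)^2 + O(\log\log y)\bigr)$ by Mertens, and the total error from $O(y^2)$ applications of Siegel--Walfisz is $O\bigl(y^2 x e^{-c\sqrt{\log x}}\bigr) = o(\pi(x))$. Thus $\sum_{p\le x}\omega_y(p\pm\ell)^2 = \pi(x)\bigl((\log\log y)^2 + O(\log\log y)\bigr)$.

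Substituting the first and second moment estimates into the expanded square, the $(\log\log y)^2 \pi(x)$ terms cancel and one is left with $O(\pi(x)\log\log y)$, which is the claim. The only mild subtlety — and the place where fixing $\ell$ matters — is bookkeeping the finitely many primes $q \le y$ dividing $\ell$: these contribute $O(\omega(\ell)) = O_\ell(1)$ to each moment and are absorbed into the error, so the bound is uniform in $x$ for each fixed $\ell$ (the implied constant may depend on $\ell$). I do not expect any real obstacle here: this is a routine adaptation of Lemma \ref{Lemma:Motohashi}, and the reference to \cite{Motohashi} and \cite[\S V.5, 1, p.~159]{Sandor} already covers the shape of the argument; the shift $p \pm \ell$ versus $p \pm 1$ changes only which residue classes are excluded, which is immaterial since $\gcd(\ell, q) = 1$ for all but $O_\ell(1)$ primes $q$.
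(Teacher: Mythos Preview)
Your proposal is correct and follows essentially the same approach as the paper's own (sketched) proof: compute the first and second moments of $\omega_y(p\pm\ell)$ via Siegel--Walfisz and Mertens, then expand the square and cancel the $(\log\log y)^2\pi(x)$ terms. You have supplied more detail than the paper does---in particular the careful handling of the primes $q\mid\ell$ and the CRT step for the off-diagonal second-moment terms---but the structure is identical.
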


\begin{proof}
Since $y \leq (\log x)^2$, apply the Siegel--Walfisz theorem to obtain
\begin{align*}
\sum_{p\leq x} \omega_y(p\pm \ell) &= \pi(x) \log \log y + O(\pi(x)), \\
\sum_{p\leq x} \omega_y(p\pm \ell)^2 &= \pi(x) (\log \log y)^2 + O(\pi(x) \log \log y),
\end{align*}
in which the $\log \log y$ term arises from an application of Mertens' theorem \cite[\S VII.28.1b]{Sandor}.  
Now expand $\sum_{p \leq x} ( \omega_y(p \pm \ell) - \log \log y)^2$ and apply the preceding.
\end{proof}

The direct generalization of Lemma \ref{Lemma:Sp} for $\ell \geq 2$ runs into trouble.
If $\ell$ is sufficiently large, then the $+1$ in \eqref{eq:GoingWrong} becomes too large
for the same argument to work.  The evenness of $\phi(p-\ell) - \phi(p+\ell)$ is no longer
sufficient to push the argument through.  Fortunately, we are able to employ the following lemma instead.

\begin{lemma}\label{Lemma:2L}
For $\ell,m \geq 1$, 
\begin{equation*}
\# \big\{ p \leq x \,:\, \phi(p\pm \ell) \equiv 0 \pmod{2^m} \big\} \,\,\sim\,\, \pi(x).
\end{equation*}
\end{lemma}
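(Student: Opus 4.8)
The plan is to show that for a density-one set of primes $p$, the number $p \pm \ell$ has at least $m$ factors of $2$; equivalently, $2^m \mid (p \pm \ell)$. Fix the sign, say $p - \ell$; the other case is identical. The idea is to produce $m$ distinct odd primes $q_1, \dots, q_m$ with the property that if $q_i \mid (p - \ell)$ for each $i$, then automatically $2^m \mid \phi(p - \ell)$, and then to show that the set of primes $p$ satisfying all of these divisibility conditions has positive density (in fact density $\prod (1 - \tfrac{1}{q_i - 1})$ or similar), whence $\phi(p \pm \ell) \equiv 0 \pmod{2^m}$ holds on a set of that density. Since $m$ is fixed, letting the number of usable primes grow will push the density to $1$.

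First I would make the key observation precise: if $q$ is an odd prime dividing $n$, then $q - 1 \mid \phi(n)$, so $2 \mid \phi(n)$; more usefully, writing $n = q_1^{a_1} \cdots q_k^{a_k}$ with the $q_i$ distinct odd primes, $\phi(n) = \prod q_i^{a_i - 1}(q_i - 1)$ is divisible by $2^{\nu_2(q_1 - 1) + \cdots + \nu_2(q_k - 1)}$. So it suffices to arrange that $p - \ell$ is divisible by $m$ distinct odd primes, since each contributes at least one factor of $2$ to $\phi$. Thus the statement reduces to: for a density-one set of primes $p$, the integer $p - \ell$ has at least $m$ distinct odd prime factors. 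Second, I would reduce this to a statement about $\omega_y(p - \ell)$: by Lemma \ref{Lemma:MotohashiGeneral}, $\sum_{p \leq x}(\omega_y(p - \ell) - \log\log y)^2 = O(\pi(x)\log\log y)$, so by Chebyshev's inequality the set of $p \leq x$ with $\omega_y(p-\ell) < \tfrac{1}{2}\log\log y$ (say) has counting function $O(\pi(x)/\log\log y) = o(\pi(x))$. Since $y = \log\log x \to \infty$, we have $\tfrac{1}{2}\log\log y \to \infty$; hence for any fixed $m$, once $x$ is large enough that $\tfrac12 \log\log y > m+1$, all but $o(\pi(x))$ of the primes $p \leq x$ satisfy $\omega_y(p - \ell) \geq m + 1$, which means $p - \ell$ has at least $m+1$ distinct prime factors $\leq y$, hence at least $m$ distinct \emph{odd} prime factors. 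Combined with the first observation, $2^m \mid \phi(p - \ell)$ for all but $o(\pi(x))$ primes $p \leq x$.

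I would then handle the sign $p + \ell$ by the identical argument, using the $\omega_y(p + \ell)$ version of Lemma \ref{Lemma:MotohashiGeneral}, which is stated there for both signs. Putting the two together, $\#\{p \leq x : \phi(p \pm \ell) \equiv 0 \pmod{2^m}\} = \pi(x) - o(\pi(x)) \sim \pi(x)$, as claimed. The main thing to be careful about — though it is not really an obstacle — is the bookkeeping that $\omega_y$ counts only primes up to $y$ and could in principle include the prime $2$: subtracting one from the count handles that, and the threshold $\tfrac12\log\log y$ (or any fixed fraction bounded below $1$) is more than enough slack. A secondary point is that Lemma \ref{Lemma:MotohashiGeneral} is stated with the centering constant $\log\log y$ rather than $\log\log x$; this is exactly what we want, since $\log\log y \to \infty$ is all that is needed, and the quality of the variance bound is irrelevant beyond being $o\big((\log\log y)^2\big)$. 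I expect the whole proof to be short.
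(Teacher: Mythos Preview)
Your proposal is correct and follows essentially the same route as the paper: both arguments reduce to the observation that $2^{\omega(n)-1}\mid \phi(n)$ (equivalently, each distinct odd prime divisor of $n$ contributes a factor of $2$ to $\phi(n)$), and then invoke Lemma~\ref{Lemma:MotohashiGeneral} together with Chebyshev's inequality to conclude that $\omega_y(p\pm\ell)$ exceeds any fixed threshold for all but $O(\pi(x)/\log\log y)$ primes $p\le x$. Your first paragraph sketches a more elaborate plan (fixing specific primes $q_1,\dots,q_m$), but you rightly abandon it for the cleaner second-paragraph argument, which matches the paper's proof almost exactly.
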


\begin{proof}
Fix $\ell \geq 1$.
Let $\omega(n)$ denote the number of distinct prime divisors of $n$.
Then $2^{\omega(n)-1} | \phi(n)$ since
$\phi(n) = \prod_{p^a\| n} p^{a-1}(p-1)$.
If 
\begin{equation}\label{eq:Key}
2+ \log_2 \ell \leq m +1 \leq  \omega(p\pm \ell),
\end{equation}
then
\begin{equation*}
2 \ell \leq 2^{m} \leq 2^{\omega(p \pm \ell)-1}
\qquad\text{and hence}\quad
\phi(p\pm \ell) \equiv 0 \pmod{2^m}.
\end{equation*}
Thus, it suffices to show that the set of primes $p \leq x$
for which \eqref{eq:Key} fails has a counting function that is $o(\pi(x))$.
Let $x$ be so large that $y = \log \log x$ satisfies
\begin{equation*}
2 + \log_2 \ell \leq \tfrac{1}{2} \log \log y
\end{equation*}
and let
\begin{equation*}
\E(x) = \big\{ p \leq x \,:\,  \omega(p- \ell) < 2 + \log_2 \ell \quad \text{or}\quad
\omega(p+ \ell) < 2 + \log_2 \ell\big\} 
\end{equation*}
Let $\omega_y(n)$ denote the number of distinct prime factors $q \leq y$ of $n$.
If $p \in \E(x)$, then
\begin{equation*}
\omega_y(p - \ell) <\tfrac{1}{2}\log \log y
\qquad\text{or}\qquad
\omega_y(p + \ell) <\tfrac{1}{2}\log \log y,
\end{equation*}
and hence
\begin{equation*}
\omega_y(p - \ell) \notin [ \tfrac{1}{2} \log \log y, \tfrac{3}{2} \log\log y]
\qquad\text{or}\qquad
\omega_y(p + \ell) \notin [ \tfrac{1}{2} \log \log y, \tfrac{3}{2} \log\log y].
\end{equation*}
Then
\begin{equation*}
\tfrac{1}{4}(\log \log y)^2 \leq \big( \omega_y(p- \ell) - \log \log y)^2
\quad\text{or}\quad
\tfrac{1}{4}(\log \log y)^2 \leq \big( \omega_y(p+ \ell) - \log \log y)^2.
\end{equation*}
Lemma \ref{Lemma:MotohashiGeneral} ensures that
\begin{align*}
\tfrac{1}{4}( \log \log y )^2\#\E(x)
&\leq \sum_{p \in \E(x)} ( \omega_y(p-\ell) - \log \log y)^2 + ( \omega_y(p+\ell) - \log \log y)^2 \\
&\leq \sum_{p \leq x} ( \omega_y(p-\ell) - \log \log y)^2 + ( \omega_y(p+\ell) - \log \log y)^2 \\
&= O( \pi(x) \log \log y).
\end{align*}
Thus,
\begin{equation*}
\#\E(x) \ll \frac{\pi(x)}{\log \log y} = o(\pi(x)). \qed
\end{equation*}
\end{proof}

Our replacement for the comparison lemma is the following.
Since the exceptional set is $o(\pi(x))$, it will not affect
the proof of Theorem \ref{Theorem:Main} in the case $\ell \geq 2$.

\begin{lemma}\label{Lemma:Spell}
For each $\ell \geq 1$,
the set of primes $p$ for which
$\phi(p-\ell) - \phi(p+\ell)$ and
\begin{equation}\label{eq:Sp}
S(p) := \frac{\phi(p-\ell)}{p-\ell} - \frac{\phi(p+\ell)}{p+\ell}
\end{equation}
have the same sign has counting function 
$\sim \pi(x)$.
\end{lemma}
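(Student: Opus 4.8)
The plan is to mimic the proof of Lemma~\ref{Lemma:Sp}, but to replace the parity argument used there with the stronger divisibility information supplied by Lemma~\ref{Lemma:2L}. As before, since $\gcd(p-\ell, p+\ell)$ divides $2\ell$, one checks that $S(p) = 0$ forces $p-\ell$ and $p+\ell$ to have the same prime factors, hence (as they are coprime away from divisors of $2\ell$) their radicals are both supported on primes dividing $2\ell$; this happens for at most $O(1)$ primes $p$, so $S(p) \neq 0$ outside a negligible set. The forward implication $\phi(p-\ell) > \phi(p+\ell) \implies S(p) > 0$ is immediate whenever $p+\ell > p-\ell$ after clearing denominators, exactly as in \eqref{eq:Trouble1}; the substance is the reverse direction, which I will establish on a set of primes of full density.

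For the reverse direction, suppose $S(p) > 0$, i.e. $\dfrac{\phi(p-\ell)}{p-\ell} > \dfrac{\phi(p+\ell)}{p+\ell}$. Clearing denominators gives
\begin{equation*}
(p+\ell)\phi(p-\ell) - (p-\ell)\phi(p+\ell) > 0,
\end{equation*}
which rearranges to
\begin{equation*}
p\big(\phi(p-\ell) - \phi(p+\ell)\big) > -\ell\big(\phi(p-\ell) + \phi(p+\ell)\big) \geq -\ell(p-\ell) - \ell(p+\ell) = -2\ell p,
\end{equation*}
using $\phi(n) \leq n$. Hence $\phi(p-\ell) - \phi(p+\ell) > -2\ell$, i.e. $\phi(p-\ell) - \phi(p+\ell) \geq -2\ell + 1$. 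Now apply Lemma~\ref{Lemma:2L} with $m$ chosen so that $2^m > 2\ell$: on a set of primes of full density, both $\phi(p-\ell)$ and $\phi(p+\ell)$ are divisible by $2^m$, hence so is their difference. A multiple of $2^m$ that is $\geq -2\ell + 1 > -2^m$ must be $\geq 0$; combined with $S(p) \neq 0$ (so equality $\phi(p-\ell) = \phi(p+\ell)$ can also be discarded via Theorem~\ref{Theorem:Equality}, which gives an exceptional set of size $o(\pi(x))$), we conclude $\phi(p-\ell) - \phi(p+\ell) > 0$ on a set of full density. The case $S(p) < 0$ is symmetric (interchange the roles of $p-\ell$ and $p+\ell$), which completes the proof.

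The main obstacle is the one already circumvented above: for large $\ell$ the naive evenness bound $\phi(p-\ell) - \phi(p+\ell) \in 2\Z$ is too weak, since the inequality only yields $\phi(p-\ell) - \phi(p+\ell) > -2\ell$ and $2\ell$ may greatly exceed $2$. The resolution is precisely that $\phi$ of a typical integer near $p$ is divisible by a large power of $2$ — this is the content of Lemma~\ref{Lemma:2L}, whose proof rests on the Turán–Kubilius estimate of Lemma~\ref{Lemma:MotohashiGeneral} to show that $\omega(p \pm \ell)$ is large (hence $2^{\omega - 1} \mid \phi$) for all but $o(\pi(x))$ primes. Once that input is in hand the rest is the same denominator-clearing manipulation as in the $\ell = 1$ case, so no genuinely new difficulty arises.
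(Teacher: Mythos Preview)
Your argument is correct and is essentially the paper's own proof: clear denominators to obtain $\phi(p-\ell)-\phi(p+\ell) > -2\ell$, then invoke Lemma~\ref{Lemma:2L} with $2^m>2\ell$ to force the difference (a multiple of $2^m$) to be nonnegative, and finally discard equality via Theorem~\ref{Theorem:Equality}. The only cosmetic difference is that the paper handles the exclusion of equality entirely through Theorem~\ref{Theorem:Equality} rather than separately analyzing $S(p)=0$; your $O(1)$ claim there is true but relies implicitly on finiteness of $S$-unit solutions, which is more than is needed.
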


\begin{proof}
By Theorem \ref{Theorem:Equality}, 
it suffices to show that the set of primes $p$ for which
\begin{equation}\label{eq:Trouble}
\phi(p-\ell ) \,> \,\phi(p+\ell )
\quad\iff\quad
\frac{\phi(p-\ell )}{p-\ell }\, > \,\frac{\phi(p+\ell )}{p+\ell }
\end{equation}
has counting function $\sim \pi(x)$.
The forward implication is straightforward, so we focus on the reverse.
If the inequality on the right-hand side of \eqref{eq:Trouble} holds, then
\begin{align*}
0 
&< (p+\ell) \phi(p - \ell) - (p - \ell) \phi(p + \ell) \\
&= p [\phi(p-\ell) - \phi(p+\ell)]  + \ell \phi(p-\ell) + \ell \phi(p + \ell) \\
&\leq p [\phi(p-\ell) - \phi(p+\ell)]  + \ell (p-\ell) + \ell (p + \ell) \\
&= p [\phi(p-\ell) - \phi(p+\ell) + 2 \ell].
\end{align*}
Let $2^m > 2\ell$ and apply lemma 
Lemma \ref{Lemma:2L} to conclude that 
$\phi(p-\ell) - \phi(p+\ell) > 0$
for $p$ in a set with counting function $\sim \pi(x)$.
\end{proof}

\subsection{Final ingredient}
The only other ingredient necessary to consider $\ell \geq 2$
is a replacement for the estimate \eqref{eq:Prachar}.
We include the proof for completeness.

\begin{lemma}\label{Lemma:Reciprocal}
$\displaystyle \sum_{p\leq x} \frac{p \pm \ell }{\phi(p\pm \ell )} \ll \pi(x)$.
\end{lemma}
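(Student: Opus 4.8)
The plan is to reduce the bound to the well-known estimate $\sum_{n \le x} n/\phi(n) \ll x$ by running the sum over $n = p \pm \ell$ and throwing away the primality constraint on $p$, but that crude approach loses too much: summing $n/\phi(n)$ over all $n \le x$ in an arithmetic progression is not quite enough because the constraint ``$n \pm \ell$ is prime'' is what keeps the sum at size $\pi(x)$ rather than $x$. So instead I would follow the standard argument (as in \cite[\S I.28, 1b]{Sandor} or \cite{Prachar}) and exploit that $n = p - \ell$ has a prime at distance $\ell$ from it. First, write
\begin{equation*}
\frac{p \pm \ell}{\phi(p \pm \ell)} = \prod_{q \mid (p \pm \ell)} \left(1 - \frac{1}{q}\right)^{-1} = \sum_{d \mid (p \pm \ell)} \frac{\mu^2(d)}{\phi(d)},
\end{equation*}
using the identity $n/\phi(n) = \sum_{d \mid n} \mu^2(d)/\phi(d)$. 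Summing over primes $p \le x$ and interchanging the order of summation gives
\begin{equation*}
\sum_{p \le x} \frac{p \pm \ell}{\phi(p \pm \ell)} = \sum_{d \le x + \ell} \frac{\mu^2(d)}{\phi(d)} \, \#\{ p \le x : d \mid (p \pm \ell) \}.
\end{equation*}

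Next I would split the $d$-sum at $d = \sqrt{x}$ (or at $(\log x)^{A}$ for a suitable $A$, depending on how sharp one wants to be). For small $d$, say $d \le \sqrt{x}$ with $\gcd(d,\ell) = 1$ (the terms with $\gcd(d,\ell) > 1$ contribute nothing since $d \mid p \pm \ell$ and $p$ prime would force $p \mid d$, giving only $O(\tau(d))$ such $p$ and a negligible total), the inner count is a count of primes in the progression $p \equiv \mp \ell \pmod d$, which by Brun's sieve (the version quoted earlier from \cite[Thm.~3, Sect.~I.4.2]{Tenenbaum}, with $f(t) = dt \mp \ell$) is $\ll \pi(x)/\phi(d)$ uniformly in this range. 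This yields a contribution
\begin{equation*}
\ll \pi(x) \sum_{d \le \sqrt{x}} \frac{\mu^2(d)}{\phi(d)^2} \ll \pi(x),
\end{equation*}
since $\sum_d \mu^2(d)/\phi(d)^2$ converges. For large $d$, i.e. $\sqrt{x} < d \le x + \ell$, I use the trivial bound $\#\{p \le x : d \mid p \pm \ell\} \le (x + \ell)/d + 1 \ll x/d$, giving a contribution $\ll x \sum_{\sqrt x < d \le x} \mu^2(d)/(d\,\phi(d))$; since $\phi(d) \gg d/\log\log d$ and $\sum_{d > \sqrt x} \log\log d / d^2 \ll x^{-1/2}\log\log x = o(1/\log x)$, this piece is $\ll x \cdot x^{-1/2} \log\log x \cdot \frac{1}{\sqrt x} \cdot(\text{something small})$; more carefully $\sum_{d>\sqrt x}\mu^2(d)/(d\phi(d)) \ll x^{-1/2}\log x$, so the large-$d$ contribution is $\ll \sqrt{x}\log x = o(\pi(x))$. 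Adding the two ranges gives $\sum_{p \le x} (p \pm \ell)/\phi(p \pm \ell) \ll \pi(x)$, as claimed.

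The main obstacle is the uniformity of the Brun sieve estimate $\#\{p \le x : p \equiv \mp\ell \pmod d\} \ll \pi(x)/\phi(d)$ across the whole range $d \le \sqrt{x}$: the Brun bound as quoted in the excerpt is stated for moduli up to $(\log x)^3$, so to reach $d$ as large as $\sqrt x$ one should instead cut the $d$-sum at $(\log x)^3$, apply Brun there, and handle $(\log x)^3 < d \le x+\ell$ with the trivial bound $\ll x/d$; then the large-range tail is $\ll x\sum_{d > (\log x)^3}\mu^2(d)/(d\phi(d)) \ll x/(\log x)^3 \cdot \log\log x \ll \pi(x)$, which still works comfortably. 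One must also be slightly careful that the term $d = 1$ contributes exactly $\pi(x)$, which is fine, and that the $+1$ in the trivial count $x/d + 1$, summed with weight $\mu^2(d)/\phi(d)$ over $d \le x$, contributes $\ll \sum_{d \le x}1/\phi(d) \ll x$, again absorbed into $\ll \pi(x)$ only if one is careful --- actually this gives $\ll x$, not $\ll \pi(x)$, so the $+1$ must be avoided by noting the count is $0$ unless $d \le x + \ell$ and then using $\lfloor (x+\ell)/d\rfloor \le (x+\ell)/d$ without the $+1$ whenever $d \le x$, which is legitimate. With these bookkeeping points dispatched, the argument is routine, and the replacement for \eqref{eq:Prachar} needed in Subsection on inconvenient primes of Type 4 follows for $p \pm \ell$ exactly as for $p \pm 1$.
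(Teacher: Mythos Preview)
Your argument is correct and follows essentially the same route as the paper: expand $n/\phi(n)$ as a divisor sum, interchange, split the $d$-sum at $(\log x)^3$, bound primes in progressions for small $d$, and use the trivial bound for large $d$. The only cosmetic differences are that the paper uses the inequality $n/\phi(n)\ll \sigma(n)/n=\sum_{d\mid n}1/d$ in place of your exact identity $n/\phi(n)=\sum_{d\mid n}\mu^2(d)/\phi(d)$, and it invokes Siegel--Walfisz rather than the Brun sieve for small moduli; your worries about the ``$+1$'' are unnecessary (that term, summed with weight $\mu^2(d)/\phi(d)$, contributes only $O(\log x)$, not $O(x)$), and the slip ``$p\mid d$'' should read ``$p\mid \ell$'', but neither affects the validity of the argument.
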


\begin{proof}
Let $\sigma(n)$ denote the sum of the divisors of $n$.  Then 
\begin{equation*}
\frac{\sigma(n)}{n} = \sum_{d|n} \frac{1}{d}
\qquad \text{and} \qquad
\frac{6}{\pi^2} < \frac{\sigma(n) \phi(n)}{n^2},
\quad \text{for $n \geq 2$};
\end{equation*}
see \cite[\S I.3.5]{Sandor}.  The Siegel--Walfisz theorem provides $C>0$ so that
\begin{align*}
\sum_{p \leq x} \frac{p \pm \ell }{ \phi(p \pm \ell ) }
&\ll \sum_{p\leq x} \frac{\sigma(p \pm \ell )}{(p \pm \ell )} 
=\sum_{p\leq x} \sum_{d|(p\pm \ell )} \frac{1}{d} 
=\sum_{d\leq x} \frac{1}{d} \!\!\!\!\!\!\!\!\sum_{\substack{p\leq x\\ p\equiv \mp 1 \pmod{d}}} \!\!\!\!\!\!\!\!1 
= \sum_{d \leq x} \frac{ \pi(x;\mp \ell,d) }{d}\\
&=\sum_{d \leq (\log x)^3} \frac{ \pi(x;\mp 1,d) }{d} + \sum_{(\log x)^3 \leq d \leq x} \frac{ \pi(x;\mp 1,d) }{d} \\
&\leq \sum_{d \leq (\log x)^3}\bigg( \frac{ \pi(x)}{d\phi(d)} + O\Big(x e^{-C \sqrt{\log x}}\Big) \bigg)\quad + \sum_{(\log x)^3 \leq d \leq x} \frac{x }{d^2} \\
&\leq \pi(x) \sum_{1 \leq d < \infty}\frac{1}{d\phi(d)} + O\Big(x (\log x)^3 e^{-C \sqrt{\log x}}\Big) \quad +\quad x \!\!\!\!\!\!\!\!\sum_{(\log x)^3 \leq d < \infty} \frac{1 }{d^2} \\
&\ll \pi(x)  + x (\log x)^3 e^{-C \sqrt{\log x}} + \frac{x}{(\log x)^3 }\\
&\ll \pi(x). \qed
\end{align*}
\end{proof}

\noindent\textbf{Acknowledgment.} We thank the referee for suggestions which improved the quality of our paper.

\bibliographystyle{amsplain}

\bibliography{OVETNPA}

\end{document}